\documentclass[a4paper, 10.5pt]{article}
\usepackage{amsmath}
\usepackage{amssymb,esint}
\usepackage{amscd}
\usepackage{mathrsfs}
\usepackage{xspace}
\usepackage{fancyhdr}
\usepackage{xcolor}
\usepackage{verbatim}
\usepackage{cite}
\usepackage{appendix}
\usepackage{hyperref}
\setcounter{MaxMatrixCols}{30}
\usepackage{srcltx} %jump between tex and dvi, used for kile
%%%%%%%%%%%%%%%%%%%%%%%%%%%%%%%%%%%%%%%%%%%%%%%%
%\usepackage{refcheck}%To check unused bibliography entries.
%%%%%%%%%%%%%%%%%%%%%%%%%%%%%%%%%%%%%%%%%%%%%%%%

\setlength{\oddsidemargin}{-0.55in}
\setlength{\evensidemargin}{-0.55in}
\setlength{\textwidth}{16.9cm}
\newtheorem{theorem}{Theorem}[section]

\textheight=25.65cm
\voffset=-2.6truecm
\hoffset=+1.1truecm

\newtheorem{lemma}[theorem]{Lemma}

\newtheorem{proposition}[theorem]{Proposition}
\newtheorem{remark}[theorem]{Remark}

\newenvironment{proof}[1][Proof]{\textbf{#1.} }{\hfill\rule{0.5em}{0.5em}}
{\catcode`\@=11\global\let\AddToReset=\@addtoreset
	\AddToReset{equation}{section}
	
	\AddToReset{theorem}{section}

	\begin{document}
		\title{Well-posedness of the  Fractional Fokker--Planck Equation}
		\author{
			{\bf Ke Chen\thanks{Email address: k1chen@polyu.edu.hk, Department of Applied Mathematics, The Hong Kong Polytechnic University, Kowloon, Hong Kong, P. R. China.}},
			{\bf Ruilin Hu\thanks{Email address: huruilin16@mails.ucas.ac.cn, Academy of Mathematics and Systems Science,
					Chinese Academy of Sciences,
					Beijing, 100190, P. R. China.
				}},
				{\bf Quoc-Hung Nguyen\thanks{Email address: qhnguyen@amss.ac.cn, Academy of Mathematics and Systems Science,
						Chinese Academy of Sciences,
						Beijing, 100190, P. R. China.} }
			}
			\date{}  
			\maketitle
			
			\begin{abstract}
				In this paper, we employ a Schauder-type estimate method, as developed in \cite{CHN}, to establish critical well-posedness result for the Fractional Fokker–Planck Equation (FFPE). This equation serves as a fundamental model in kinetic theory and can be regarded as a semi-linear analogue of the non-cutoff Boltzmann equation. We demonstrate that the techniques introduced in this study are not only effective for the FFPE but also hold promise for broader applications, particularly in addressing the non-cutoff Boltzmann equation and the Landau equation.  Our results contribute to a deeper understanding of the analytical framework required for these complex kinetic models.
			\end{abstract}
			\section{Introduction}
			The Fractional Fokker-Planck Equation (FFPE) serves as a fundamental tool for describing the evolution of probability distributions in systems where anomalous diffusion occurs, which reads
			\begin{equation}\label{eq:fk}
			\begin{aligned}
			&\partial_t f+v\cdot \nabla_x f+\Lambda_v^{\alpha} f=\operatorname{div}_v(f\nabla_v \Lambda_v^{-\beta}f ),\\
			&f|_{t=0}=f_0,
			\end{aligned}
			\end{equation}
			where $f=f(t,x,v):\mathbb{R}^+\times \mathbb{R}^d\times \mathbb{R}^d\to \mathbb{R}$, $\Lambda_v=(-\Delta_v)^\frac{1}{2}$ is the fractional Laplacian operator. In this paper, we consider the regime 
			\begin{align*}
			\beta\in(0,1),\quad\quad\alpha\in(1,2),\quad \quad \alpha+\beta>2. 
			\end{align*}
			The equation is invariant under the scaling transformation
			\begin{align}\label{scale}
			f_{\lambda}(t,x,v)=\lambda^{-\kappa}f(\lambda^{-1} t,\lambda^{-(1+\frac{1}{\alpha})}x,\lambda^{-\frac{1}{\alpha}}v),
			\end{align}
			where $\kappa=\frac{\alpha+\beta-2}{\alpha}>0$.
			
			The FFPE system is the semi-linear version of non-cutoff Boltzmann system.  Consider the non-cutoff Boltzmann system
			\begin{align*}
			&\partial_tf+v\cdot\nabla_xf=\mathcal{Q}_s(f,f),\ \ s\in(0,1),\\
			&f|_{t=0}=f_0.
			\end{align*}
			Here  $\mathcal{Q}_s(f,f)$ is the Boltzmann collision operator which is defined in the following way.
			\begin{align*}
			\mathcal{Q}_s(f_1,f_2)(v):=	
			\int_{\mathbb{R}^d}\int_{S^{d-1}}(f_2(v_\star')f_1(v')-f_2(v_\star)f_1(v))B(|v_\star-v|,\cos(\theta))dS^{d-1}(\sigma) dv_\star,
			\end{align*}
			where $B$ is the standard non-cutoff collision kernel $B(|v_\star-v|,\cos(\theta))=|v_\star-v|^\gamma b(\cos(\theta))$ with 
			$b(\cos(\theta))\sim |\sin(\theta/2)|^{-(d-1)-2s}$.
			The standard relation between velocities before and after elastic collision is given by 
			\begin{align*}
			v'=\frac{v+v_\star}{2}+\frac{|v-v_\star|}{2}\sigma,~~ v'_\star=\frac{v+v_\star}{2}-\frac{|v-v_\star|}{2}\sigma.
			\end{align*}
			Using Carleman coordinates and the cancellation lemma (see \cite{ADVW}), the Boltzmann collision operator takes the following form
			\begin{align*}
			&\mathcal{Q}_s(f_1,f_2)(v)=-\mathcal{L}^s_{f_2}f_1(v)+c_bf_1(v)\int_{\mathbb{R}^d}f_2(v+w)|w|^{\gamma}dw,\quad \gamma>-d,\\
			&\mathcal{L}^s_{f_2}f_1(v)=\int_{\mathbb{R}^d}(f_1(v)-f_1(v'))\mathbf{C}_{f_2}(v,v-v')\frac{dv'}{|v-v'|^{d+2s}},\\
			&\mathbf{C}_{f_2}(v,z)=2^{d-1}\int_{w\cdot z=0}f(v+w)|w|^{2s+1+\gamma}A(\frac{|z|^2}{|w|^2})1_{|w|\geq |z|}dw,\quad A(\rho)\sim 1,
			\end{align*}
			where $A$ is a bounded function only depending on the collision kernel $B$.
			One can see that when $\mathbf{C}_{f_2}$ is replaced by $1$  and let $\alpha=2s$, then the semi-linear operator $\mathcal{L}_{f_2}^s$ reduces to $\Lambda^{\alpha}_v$, which precisely corresponds to the fractional diffusion operator in the FFPE framework. Furthermore, the nonlinear term $c_bf_1(v)\int_{\mathbb{R}^d}f_2(v+w)|w|^{\gamma}dw$ exhibits structural similarities to $\operatorname{div}_v(f\nabla_v \Lambda_v^{-\beta}f )$ for an appropriate choice of $\beta$. These fundamental parallels between the two systems provide the primary motivation for our study of the FFPE as a tractable model that captures essential features of the more complex non-cutoff Boltzmann dynamics. 
			%When $s=1$, the system becomes Landau
			%\begin{equation*}
			%\begin{aligned}
			%&\partial_tf+v\cdot \nabla_xf=a_{ij}\star f\partial_{v_iv_j}^2 f-cf (|.|^{\gamma}* f),\\
			%&f(0,x,v)=f_0(x,v),
			%\end{aligned}
			%\end{equation*}
			%where $a=c_0|z|^\gamma(|z|^2Id-z\otimes z)$ for some $c_0, c>0$.
			
			The study of non-cut-off Boltzmann and Landau system is an interesting field. Local regularity for polynomially decay data is proved in \cite{CH} and the global regularity for non-cut-off Boltzmann equations can be seen in \cite{CI4}.  C. Imbert and L. Silvestre proved Schauder estimates for solution of non-cut-off Boltzmann equations in \cite{CI3}. L. He \cite{LBH} proved sharp estimates for Boltzmann and Landau collision operators. The Harnack inequality for Landau was proved in \cite{FG}, and weak Harnack inequality for non-cut-off Boltzmann equation was proved in \cite{CI2}. Other significant contributions can be seen in \cite{LBH2,GHIV}.
			
			The case $\alpha=2$ is the classical Fokker-Planck or Kolmogorov equation. \cite{AIN} proved the regularity for the solution to this system. The hypoellipticity of the operator has been proved in \cite{FB2002}. The Gevery hypoellipyicity result can be seen in \cite{CLX} and \cite{CLX2}. Readers are also recommended to \cite{CC,CV,LH} for more information.
			
			Unlike classical diffusion processes modeled by the standard Fokker-Planck equation, anomalous diffusion accounts for phenomena such as heavy-tailed distributions or long-range interactions, which are prevalent in diverse fields, including physics, biology, and finance. The FFPE incorporates fractional derivatives, representing non-local effects and memory properties, thereby providing a versatile framework to study complex stochastic dynamics.
			
			From a mathematical perspective, the FFPE can be regarded as a semilinear version of the non-cutoff Boltzmann equation, a cornerstone in kinetic theory. The study of critical well-posedness is essential in understanding the existence, uniqueness, and stability of solutions to differential equations under scaling-invariant or borderline conditions. For fluid dynamic equations such as the Navier-Stokes Equations (NSE), critical well-posedness theory has been extensively studied and has yielded profound insights into fluid behavior. F. Bouchut gived regularity results for weak solution in \cite{FB2002}, and R. Alexandre did some further estimates in \cite{RA} based on energy estimates. Hypoelliptic estimates in weighted functional spaces can be seen in Y. Morimoto and C.-J. Xu's work \cite{YMCX}. The optimal estimate was given in \cite{NL} by microlocal techiniques. More results for regularity can be seen in \cite{AIN,AIN2,HZ2}.
			
			However, extending these concepts to kinetic equations such as the FFPE and the non-cutoff Boltzmann equation presents a fundamentally different set of difficulties. These include handling the interplay between fractional diffusion operators and nonlinear terms, as well as addressing the loss of regularity and the propagation of singularities. For example, Harnack inequality fails for fractional kinetic operator, see \cite{GIHV,MKMW}. In this paper, we aim to contribute to the understanding of critical well-posedness in the context of kinetic equations by focusing on the FFPE. Building upon our previous work \cite{CHN} on a Schauder-type estimate and its applications in well-posedness theory, we develop and adapt these analytical techniques to tackle the FFPE.  By extending this methodology to the fractional and non-local operators present in the FFPE, we aim to establish new regularity results and develop a robust framework for studying the well-posedness of the equation. We mention the recent independent results of F. Grube \cite{FG}, which proved a two-sided estimate for the kernel of \eqref{eq:fk} in 1-D case by Fourier method, and the work of H. Hou and X. Zhang \cite{HouZhang}, which established a two-sided estimate for any dimension by a stochastic method. 
			
			For brevity of notations, we denote $e^{-tP}$ the semigroup associate to  the linear operator $\partial_t +P(\nabla_x,\nabla_v)$, where $P(\nabla_x,\nabla_v)$ is the differential operator with symbol $P(\xi,\eta)=(|\xi|^{\frac{2}{1+\alpha}}+|\eta|^2)^{\frac{\alpha}{2}}$. Define
			\begin{equation}\label{z1}
			[f_0]:=\sup_{t>0}t^{1+\frac{\beta-2}{\alpha}}||e^{-tP}f_0||_{L^\infty(\mathbb{R}^d_x\times \mathbb{R}^d_v)},
			\end{equation}
			
			The main result in this article is the following.    
			\begin{theorem}\label{mainthm}  Let $A\geq  100$.
				There exists $\epsilon_0>0$ such that for any initial data $f_0$ satisfying $[f_0]\leq \epsilon_0$,
				then \eqref{eq:fk} admits a unique global solution $f$ satisfying 
				\begin{align*}
				\sup_{t>0}\sum_{m+n\leq A} t^{m+\frac{m+n+\alpha+\beta-2}{\alpha}}\|\nabla_x^m\nabla_v^n f(t)\|_{L^\infty(\mathbb{R}^d_x\times \mathbb{R}^d_v)}\leq C [f_0].
				\end{align*}
			\end{theorem}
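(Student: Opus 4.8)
The plan is a Picard iteration in the scaling--critical space
\[
X:=\Big\{f:\ \|f\|_X:=\sup_{t>0}\sum_{m+n\le A}t^{\,m+\frac{m+n+\alpha+\beta-2}{\alpha}}\,\|\nabla_x^m\nabla_v^n f(t)\|_{L^\infty(\mathbb{R}^d_x\times\mathbb{R}^d_v)}<\infty\Big\},
\]
whose weights are forced by the scaling \eqref{scale}, so that $\|f_\lambda\|_X=\|f\|_X$; in particular the exponent of the $m=n=0$ term is $\frac{\alpha+\beta-2}{\alpha}=1+\frac{\beta-2}{\alpha}$, matching \eqref{z1}. Let $\mathcal{S}(t)=e^{-t(v\cdot\nabla_x+\Lambda_v^\alpha)}$ be the semigroup of the linear kinetic part and write \eqref{eq:fk} in mild form
\[
f(t)=\mathcal{S}(t)f_0+\mathcal{B}(f,f)(t),\qquad \mathcal{B}(f,g)(t):=\int_0^t\mathcal{S}(t-\tau)\,\operatorname{div}_v\!\big(f\,\nabla_v\Lambda_v^{-\beta}g\big)(\tau)\,d\tau .
\]
The theorem reduces to the two estimates (i)~$\|\mathcal{S}(\cdot)f_0\|_X\lesssim [f_0]$ and (ii)~$\|\mathcal{B}(f,g)\|_X\lesssim\|f\|_X\|g\|_X$, after which the standard Banach contraction lemma in the ball $\{\|f\|_X\le 2C[f_0]\}$ with $[f_0]\le\epsilon_0$ small produces the unique global solution; the quantitative bound is exactly the output of the fixed point, uniqueness in $X$ follows from (ii) by the usual difference argument, and the $A$ many derivative bounds in $\|\cdot\|_X$ let one verify that the mild solution solves \eqref{eq:fk} classically.

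For (i) the inputs, as developed in \cite{CHN}, are: the anisotropic smoothing bounds for the kinetic semigroup, of the schematic form
\[
\|\nabla_x^m\nabla_v^n\Lambda_v^s\,\mathcal{S}(\sigma)h\|_{L^\infty}\ \lesssim\ \sigma^{-m(1+\frac1\alpha)-\frac{n+s}{\alpha}}\,\|h\|_{L^\infty},
\]
reflecting that under the parabolic kinetic scaling one $v$--derivative costs $\sigma^{-1/\alpha}$ and one $x$--derivative costs $\sigma^{-(1+1/\alpha)}$; and the pointwise domination, after the Galilean change of variables removing $v\cdot\nabla_x$, of the kernel of $\mathcal{S}(\sigma)$ by that of the model operator $e^{-c\sigma P}$, which gives $\sigma^{\frac{\alpha+\beta-2}{\alpha}}\|\mathcal{S}(\sigma)f_0\|_{L^\infty}\lesssim[f_0]$. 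Writing $\mathcal{S}(t)=\mathcal{S}(t/2)\mathcal{S}(t/2)$, applying the smoothing bound to the first factor and the $[f_0]$--bound to the second controls every term of $\|\mathcal{S}(\cdot)f_0\|_X$.

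Estimate (ii) is the heart of the matter and the step I expect to be the main obstacle. First one records the $L^\infty$ mapping properties of the multipliers appearing in the nonlinearity: since $\beta\in(0,1)$, a Littlewood--Paley/Bernstein decomposition in $v$ — summing the low frequencies because the multiplier orders $1-\beta$ and $2-\beta$ are positive, and the high frequencies using the $A\gg1$ available derivatives — yields the scale--invariant bounds
\[
\|\nabla_v\Lambda_v^{-\beta}h\|_{L^\infty}\lesssim\|h\|_{L^\infty}^{\beta}\|\nabla_v h\|_{L^\infty}^{1-\beta},\qquad
\|\Lambda_v^{2-\beta}h\|_{L^\infty}\lesssim\|h\|_{L^\infty}^{\beta/2}\|\nabla_v^2 h\|_{L^\infty}^{1-\beta/2},
\]
together with their analogues carrying extra $x,v$--derivatives. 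One then expands $\nabla_x^m\nabla_v^n\mathcal{B}(f,g)$ by the Leibniz rule, splits $\int_0^t=\int_0^{t/2}+\int_{t/2}^t$, and distributes derivatives differently in the two regimes. On $[0,t/2]$ one has $t-\tau\sim t$, so $\mathcal{S}(t-\tau)$ absorbs all $m+n$ derivatives together with the outer $\operatorname{div}_v$, while the nonlinearity is estimated by $\|f\,\nabla_v\Lambda_v^{-\beta}g\|_{L^\infty}\lesssim\tau^{-\frac{2\alpha+\beta-3}{\alpha}}\|f\|_X\|g\|_X$, whose $\tau$--integral converges at $\tau=0$ precisely because $\alpha+\beta<3$ (automatic under the hypotheses). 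On $[t/2,t]$ one has $\tau\sim t$, so the Leibniz factors of $f$ and $g$ are measured by $\|\cdot\|_X$ and the multiplier bounds at time $\tau\sim t$, while $\mathcal{S}(t-\tau)$ is asked to carry only $\operatorname{div}_v$ together with slightly fewer than $\alpha-1$ further $v$--derivatives — possible since $\alpha>1$, which makes the $\tau$--integral converge at $\tau=t$ and renders the order of the leftover multiplier on $g$ at most $2-\alpha-\beta+\delta<0$ thanks to $\alpha+\beta>2$, hence harmless. In both regimes the resulting power of $t$ matches $t^{-m-\frac{m+n+\alpha+\beta-2}{\alpha}}$ automatically, since $\|\cdot\|_X$ is scale--invariant; the hypothesis $\alpha+\beta>2$ is exactly what turns the $v$--order $2-\beta$ of the nonlinearity into a net gain against the semigroup's smoothing exponent $\alpha$. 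The only genuinely delicate points are keeping the bookkeeping of the two derivative families consistent with the anisotropic scaling and controlling the $\tau\to0$ endpoint; everything else is routine once (i)--(ii) are in place.
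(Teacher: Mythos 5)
Your overall skeleton (mild formulation, a scaling--critical norm, linear estimate against $[f_0]$, bilinear estimate, Banach fixed point) is the same as the paper's. The genuine gap is in step (ii), and it is precisely located at the top derivative order $m+n=A$: your iteration space consists only of integer--derivative $L^\infty$ norms, but the worst Leibniz term in $\nabla_x^m\nabla_v^n\big(f\,\nabla_v\Lambda_v^{-\beta}g\big)$ is $f\cdot\nabla_v\Lambda_v^{-\beta}\nabla_x^m\nabla_v^ng$, which involves a derivative of $g$ of fractional order $A+1-\beta$ in $v$. Your interpolation bound $\|\nabla_v\Lambda_v^{-\beta}h\|_{L^\infty}\lesssim\|h\|_{L^\infty}^{\beta}\|\nabla_v h\|_{L^\infty}^{1-\beta}$ (which is fine in itself) would require $A+1$ derivatives of $g$, which your norm does not control, so the bilinear estimate does not close in your space. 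Your proposed remedy --- letting $\mathcal{S}(t-\tau)$ carry $\operatorname{div}_v$ plus ``slightly fewer than $\alpha-1$'' extra $v$-derivatives and declaring the leftover multiplier of negative order $2-\alpha-\beta+\delta<0$ on $g$ ``harmless'' --- does not survive scrutiny: the fractional operator sits inside the product $f\,\nabla_v\Lambda_v^{-\beta}g$ and cannot be redistributed across it without a paraproduct/commutator analysis, and a multiplier of negative order such as $\Lambda_v^{-\epsilon}$ is not bounded on $L^\infty$ (its kernel is not integrable at infinity), so ``negative order, hence harmless'' is not a valid step in an $L^\infty$-based framework.

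The paper closes exactly this gap by changing the space, not the scheme: the norm $X$ carries, at top order, anisotropic H\"older seminorms $\dot C^{\frac{\gamma}{1+\alpha},\gamma}_{x,v}$ with $\alpha-1<\gamma<2-\beta$, the force is measured in $\dot C^{\gamma_0}$ with $\gamma_0=\gamma+1-\alpha$ (the norm $|||\cdot|||$ of Lemma \ref{lemnl}), and the Duhamel operator is shown to gain exactly $\gamma-\gamma_0=\alpha-1$ orders via a Schauder-type argument: the cancellation $\iint\nabla_uH\,dy\,du=0$ lets one insert $F(\tau,y,u)-F(\tau,y,v)$, and the weighted and finite-difference $L^1$ kernel bounds of Lemma \ref{L1nom} do the rest. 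The product estimate (Lemma \ref{estforce}) then only needs $1-\beta+\gamma_0\le\gamma$, i.e. $\alpha+\beta\ge2$ --- this is where the hypothesis enters, as the statement $\alpha-1>1-\beta$ that the kernel's gain beats the nonlinearity's loss, which is the quantitative version of the heuristic you stated. A second, more minor, gap is your linear estimate (i): pointwise domination of the kernel of $\mathcal{S}(\sigma)$ by that of $e^{-c\sigma P}$ does not by itself bound $\|\mathcal{S}(\sigma)f_0\|_{L^\infty}$ by $\|e^{-c\sigma P}f_0\|_{L^\infty}$ for sign-changing $f_0$; the paper's Lemma \ref{lemlin} instead goes through a Fourier multiplier $\tilde P$ bounded on $L^\infty$ together with the subordination identity that compares $\langle\nabla_x,\nabla_v\rangle^{-200N}f_0$ with $(1+P)^{-[4/\alpha]}f_0$ and hence with $[f_0]$. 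Finally, note the theorem's stated conclusion (integer $L^\infty$ bounds) is recovered from the fixed point in the H\"older-based $X$; running the fixed point directly in the integer-based space, as you propose, is the step that fails.
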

			\begin{remark} i) The $[\cdot]$ norm  defined in \eqref{z1} is scaling invariant with respect to \eqref{scale}, which implies our result is critical.\\
				ii) It is well known that for any $u=u(x)$, we have 
				\begin{equation*}
				||u||_{B^{-a}_{\infty,\infty}}\sim \sup_{t>0}t^{\frac{a}{2}}||e^{t\Delta}u||_{L^\infty(\mathbb{R}^d)}.
				\end{equation*}
				The norm in \eqref{z1} is an anisotropic Besov norm with scaling $\Lambda_v\sim \Lambda_x^{\frac{1}{1+\alpha}}$. Moreover, we have 
				\begin{align*}
				&[f_0]\leq C \|\Lambda_v^{-\gamma_1}\Lambda_x^{-\frac{\gamma_2}{(1+\alpha)}}f_0\|_{L^\infty(\mathbb{R}^d_x\times \mathbb{R}^d_v)},\\&
				[f_0]\leq C \|(\Lambda_x^{\frac{2}{1+\alpha}}+\Lambda_v^2)^{-\alpha\kappa}f_0\|_{L^\infty(\mathbb{R}^d_x\times \mathbb{R}^d_v)},
				\end{align*}
				where  $0\leq \gamma_1,\gamma_2\leq 1$ and $\gamma_1+\gamma_2=\alpha\kappa$ with $\kappa=1+\frac{\beta-2}{\alpha}$ as defined in \eqref{scale}. Here, for function $Q(|\xi|,|\eta|)$, we use the operator $Q(\Lambda_x,\Lambda_v)$ to denote $Q(\Lambda_x,\Lambda_v)f=\mathcal{F}^{-1}_{x,v}(Q(|\xi|,|\eta|)\hat{f}(\xi,\eta))$.
			\end{remark}
			% \begin{remark}
			%	It can be checked that the $[\cdot]$ norm is scaling invariant with respect to \eqref{scale}, which implies our result is critical. In fact, in Remark \ref{datanorm}, we can see that $[f_0]\leq C \|\Lambda_v^{-\gamma_1}\Lambda_x^{-\frac{\gamma_2}{(1+\alpha)}}f_0\|_{L_{x,v}^\infty}$ with $0\leq \gamma_1,\gamma_2\leq 1$ and $\gamma_1+\gamma_2=\alpha\kappa$ with $\kappa$ defined in \eqref{scale}. Furthermore, $[f_0]\leq C \|(\Lambda_x^{\frac{2}{1+\alpha}}+\Lambda_v^2)^{-\alpha\kappa}f_0\|_{L_{x,v}^\infty}$. In some sense the norm $[\cdot]$ is like the Besov norm, we will discuss in Remark \ref{datanorm}.
			% \end{remark}
			%\begin{remark}\label{Besov}
			%Let $P(\xi,\eta)=|\xi|^{\frac{2}{1+\alpha}}+|\eta|^2$, and we define the anisotropic Besov space as
			%\begin{equation*}
			%	\|f\|_{\dot B_{\infty,\infty}^{-\gamma}(\mathbb{R}^d_x\times\mathbb{R}^d_v)}=\sup_{t\geq 0}t^{\frac{\gamma}{\alpha}}\|e^{-ctP^{\alpha}}f\|_{L_{x,v}^\infty},
			%	\end{equation*}
			%for some $c>0$. Note that this definition is compatible with the original definition of Besov space, and is critical with respect to \eqref{scale}. We may discuss about the regularity for data in this space, see Remark \ref{datanorm}. 
			%\end{remark}
			
			The novelty of this work lies in the adaptation and refinement of techniques from fluid dynamics and geometric analysis to the kinetic setting, where the interplay between non-local operators and nonlinearity presents unique challenges. Specifically, we address the following key objectives:\\
			i) Establishing Critical Well-Posedness: We aim to derive conditions under which the FFPE admits unique solutions in critical spaces, ensuring that these solutions are stable and depend continuously on the initial data.\\
			ii) Extending Schauder-Type Estimates: By leveraging our previous work, we extend Schauder-type methods to fractional kinetic equations, providing tools to handle the inherent non-locality and singularities of the FFPE.\\
			iii) Bridging Gaps Between Fields: This work serves as a bridge between the well-posedness theories developed for fluid dynamic equations and their application to kinetic equations, offering new perspectives and methodologies for tackling open problems in the field.
			
			The results presented in this paper have implications beyond the FFPE itself, as the methods and insights gained here may inform the study of other kinetic equations, including the non-cutoff Boltzmann equation. Furthermore, they contribute to the broader understanding of fractional differential equations, which are increasingly recognized as powerful tools for modeling complex systems across scientific disciplines.
			
			By addressing the critical well-posedness of the FFPE, we hope to advance the mathematical theory of anomalous diffusion and kinetic equations, laying the groundwork for future research in this challenging and highly relevant area of mathematical analysis. The method in this paper can be extended to study well-posedness for non-cut-off Boltzmann equation with critical data.
			
			We will clarify some of the notations in this article. We denote $\langle a\rangle:=(1+|a|^2)^{\frac{1}{2}}$ for $a\in\mathbb{R}^n$. We denote $A\lesssim B$ if there exists universal constant $C$ such that $A\leq CB$. We denote $\delta_a^xf(x)=f(x)-f(x-a)$, and we can similarly define $\delta_a^v$. For short, we denote the Fourier transform $\mathcal{F}(f)=\hat{f}$. We denote the Lebesgue space $L^p_{x,v}=L^p(\mathbb{R}^d_x\times \mathbb{R}^d_v)$ for any $1\leq p\leq \infty$. For any function $P(\xi,\eta)$, we define the operator $P(\nabla_x,\nabla_v)f=\mathcal{F}^{-1}_{x,v}(P(\xi,\eta)\hat{f}(\xi,\eta))$.
			\section{Main theorem and proof}
			First we will prove a representation formula of the solution.
			\begin{lemma}\label{lemform}
				The solution to the Cauchy problem 
				\begin{align*}
				&\partial_t f+v\cdot \nabla_x f+\Lambda^{\alpha}_v f=F,\ \ \text{in}\ (0,+\infty)\times \mathbb{R}^d\times \mathbb{R}^d,\\
				&f|_{t=0}=f_0,
				\end{align*}
				can be written as 
				\begin{align}\label{solfor}
				f(t,x,v)=(H(t)\ast	f_0)(x-tv,v)+\int_0^t (H(t-\tau)\ast  F(\tau))(x-(t-\tau)v,v)d\tau,
				\end{align}
				where 
				\begin{align*}
				&H(t,x,v)=\frac{1}{(2\pi)^d}\int_{\mathbb{R}^{2d}}
				\exp\left(-\int_{0}^{t}|\sigma\xi-\eta|^\alpha d\sigma +i\xi\cdot x+i\eta\cdot v\right) d\xi d\eta.
				\end{align*}	
			\end{lemma}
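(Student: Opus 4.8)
The plan is to reduce the Cauchy problem, by a Fourier transform in the space variables $(x,v)$, to a scalar linear ODE in which the dual variable $\xi$ plays only the role of a frozen parameter. Denote by $\hat f(t,\xi,\eta)$ the Fourier transform of $f(t,\cdot,\cdot)$ in $(x,v)$. Using $\mathcal{F}_{x,v}(v\cdot\nabla_x f)=-\xi\cdot\nabla_\eta\hat f$ together with $\mathcal{F}_{x,v}(\Lambda_v^\alpha f)=|\eta|^\alpha\hat f$, the equation becomes
\[
\partial_t\hat f-\xi\cdot\nabla_\eta\hat f+|\eta|^\alpha\hat f=\hat F,\qquad \hat f|_{t=0}=\hat f_0 .
\]
The first-order part $\partial_t-\xi\cdot\nabla_\eta$ transports along the lines $s\mapsto\eta-s\xi$, so I would set $g(t,\xi,\eta):=\hat f(t,\xi,\eta-t\xi)$; differentiating and inserting the equation shows that for each fixed $(\xi,\eta)$ the function $g$ solves the scalar linear ODE
\[
\partial_t g(t,\xi,\eta)=-|\eta-t\xi|^\alpha\,g(t,\xi,\eta)+\hat F(t,\xi,\eta-t\xi),\qquad g(0,\xi,\eta)=\hat f_0(\xi,\eta).
\]

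Next I would integrate this ODE with the integrating factor $\exp\!\big(\int_0^t|\eta-s\xi|^\alpha\,ds\big)$ and undo the substitution through $\hat f(t,\xi,\eta)=g(t,\xi,\eta+t\xi)$; after the change of variable $\sigma=t-s$ in the time integrals this gives
\[
\hat f(t,\xi,\eta)=e^{-\int_0^t|\eta+\sigma\xi|^\alpha d\sigma}\,\hat f_0(\xi,\eta+t\xi)+\int_0^t e^{-\int_0^{t-\tau}|\eta+\sigma\xi|^\alpha d\sigma}\,\hat F(\tau,\xi,\eta+(t-\tau)\xi)\,d\tau .
\]
To recognize the kernel, note that $H(t)$ is, up to normalization, the inverse Fourier transform in $(x,v)$ of $(\xi,\eta)\mapsto e^{-\int_0^t|\sigma\xi-\eta|^\alpha d\sigma}$, so $\hat H(t,\xi,\eta)=e^{-\int_0^t|\sigma\xi-\eta|^\alpha d\sigma}$; the change of variable $\sigma\mapsto r-\sigma$ (together with $|a|=|-a|$) then yields, for every $r\ge 0$,
\[
\hat H(r,\xi,\eta+r\xi)=e^{-\int_0^r|\sigma\xi-\eta-r\xi|^\alpha d\sigma}=e^{-\int_0^r|\eta+\sigma\xi|^\alpha d\sigma},
\]
which is exactly the weight appearing above. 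Hence $\hat f(t,\xi,\eta)=\hat H(t,\xi,\eta+t\xi)\,\hat f_0(\xi,\eta+t\xi)+\int_0^t\hat H(t-\tau,\xi,\eta+(t-\tau)\xi)\,\hat F(\tau,\xi,\eta+(t-\tau)\xi)\,d\tau$.

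It remains to translate this identity back to physical space, which rests on two elementary facts: multiplication of $\hat f_0(\xi,\cdot)$ by $\hat H(t,\xi,\cdot)$ corresponds to convolution in $(x,v)$ with $H(t)$, and the frequency shift $\eta\mapsto\eta+t\xi$ is precisely the Fourier-side effect of the free-transport translation, i.e.\ if $u_t(x,v):=u(x-tv,v)$ then $\widehat{u_t}(\xi,\eta)=\hat u(\xi,\eta+t\xi)$. Applying both to the homogeneous term and to the Duhamel term produces exactly \eqref{solfor}. I do not expect a genuine obstacle in this lemma, since it is essentially a computation; the only points deserving care are the Fourier identity for the transport term and the legitimacy of the convolutions and of Fourier inversion, which follow from the pointwise bound $|\hat H(t,\xi,\eta)|=e^{-\int_0^t|\sigma\xi-\eta|^\alpha d\sigma}\le 1$ and its rapid decay in $(\xi,\eta)$ for $t>0$ (so that $H(t)$ is a Schwartz function for each $t>0$). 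Alternatively, for merely bounded, non-smooth data one may verify \eqref{solfor} a posteriori by differentiating in $t$ and substituting into the equation, which reduces to the identities $\partial_t\hat H(t,\xi,\eta)=-|t\xi-\eta|^\alpha\hat H(t,\xi,\eta)$, $\hat H(0,\xi,\eta)=1$, and $(\partial_t-\xi\cdot\nabla_\eta)\big[\phi(t,\eta+t\xi)\big]=(\partial_t\phi)(t,\eta+t\xi)$.
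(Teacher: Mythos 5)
Your proposal is correct and follows essentially the same route as the paper: Fourier transform in $(x,v)$, the substitution $\hat g(t,\xi,\eta)=\hat f(t,\xi,\eta-t\xi)$ to straighten the transport term, solution of the resulting scalar ODE by an integrating factor, and inversion back to physical space using the translation identity $\widehat{u(\cdot-tv,\cdot)}(\xi,\eta)=\hat u(\xi,\eta+t\xi)$. The only cosmetic difference is that you undo the characteristic change of variables on the Fourier side before inverting, whereas the paper inverts first and then uses $f(t,x,v)=g(t,x-tv,v)$; the computations coincide.
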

			\begin{proof}
				We rewrite the equation in terms of Fourier transform 
				\begin{align*}
				&	\partial_t \hat f(t,\xi,\eta)-\xi\cdot\nabla_\eta \hat f(t,\xi,\eta)+|\eta|^\alpha \hat f(t,\xi,\eta)=\hat F(t,\xi,\eta),\\
				&	\hat f|_{t=0}=\hat f_0.
				\end{align*}
				Let $\hat g(t,\xi,\eta)=\hat f(t,\xi,\eta-t\xi)$, which is equivalent to $g(t,x,v)=f(t,x+tv,v)$. Then 
				\begin{align*}
				&	\partial_t \hat g(t,\xi,\eta)+|\eta-t\xi|^\alpha\hat g(t,\xi,\eta)=\hat F(t,\xi,\eta-t\xi),\\
				&	\hat g|_{t=0}=\hat f_0.
				\end{align*}
				Solving the ODE above, we obtain 
				\begin{align*}
				\hat g(t,\xi,\eta)&=\exp\left(-\int_0^t |\eta-\sigma\xi|^\alpha d\sigma\right)\hat f_0(\xi,\eta)+\int_0^t \exp\left(-\int_\tau^t |\eta-\sigma\xi|^\alpha d\sigma\right)\hat F(\tau,\xi,\eta-\tau\xi)d\tau\\
				&=\exp\left(-\int_0^t |\eta-\sigma\xi|^\alpha d\sigma\right)\hat f_0(\xi,\eta)+\int_0^t \exp\left(-\int_0^{t-\tau} |\eta-\tau\xi-\sigma\xi|^\alpha d\sigma\right)\hat F(\tau,\xi,\eta-\tau\xi)d\tau.
				\end{align*}
				Taking inverse Fourier transform, we get
				\begin{align*}
				g(t,x,v)=(H(t)\ast f_0)(x,v)+\int_0^t (H(t-\tau)\ast F(\tau))(x+\tau v,v)d\tau.
				\end{align*}
				This implies \eqref{solfor} in view of the relation $f(t,x,v)=g(t,x-tv,v)$. Then we complete the proof. \vspace{0.2cm}
			\end{proof}
			
			We have the following point-wise estimate on $H(t,x,v)$ from 	\cite{HouZhang}.
			\begin{lemma} For any $m,n\in\mathbb{N}$, it holds that
				\begin{equation}\label{z1000}
				|\nabla_x^m\nabla_v^n H(t,x,v)|\lesssim \frac{t^{-\frac{2d+m+n}{\alpha}-m-d}}{\langle t^{-\frac{1}{\alpha}-1}x, t^{-\frac{1}{\alpha}}v\rangle^{d+\alpha+1}\langle t^{-\frac{1}{\alpha}-1}\inf_{\sigma\in[0,1]}|x-\sigma tv|\rangle^{d+\alpha-1}}.
				\end{equation}
			\end{lemma}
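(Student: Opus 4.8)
The plan is to combine an exact scaling reduction with a stationary-phase / integration-by-parts analysis of the oscillatory integral defining $H$. \emph{First}, substituting $\xi=t^{-1-1/\alpha}\xi'$ and $\eta=t^{-1/\alpha}\eta'$ in the formula for $H$ and using the identity $\int_0^t|\eta-\sigma\xi|^\alpha\,d\sigma=\int_0^1|\eta'-\sigma\xi'|^\alpha\,d\sigma$ (which follows after also rescaling $\sigma\mapsto t\sigma$), one obtains the self-similarity
\[
\nabla_x^m\nabla_v^n H(t,x,v)=t^{-d-m-\frac{2d+m+n}{\alpha}}\,\big(\nabla_x^m\nabla_v^n H\big)\big(1,\,t^{-1-\frac1\alpha}x,\,t^{-\frac1\alpha}v\big),
\]
and since $t^{-1-1/\alpha}(x-\sigma t v)=t^{-1-1/\alpha}x-\sigma\,t^{-1/\alpha}v$ the two weights rescale consistently as well; hence it suffices to treat $t=1$. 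Writing $\Phi(\xi,\eta):=\int_0^1|\eta-\sigma\xi|^\alpha\,d\sigma$, the quantity to bound is $\int_{\mathbb R^{2d}}(i\xi)^{\otimes m}(i\eta)^{\otimes n}\,e^{-\Phi(\xi,\eta)}\,e^{i(\xi\cdot x+\eta\cdot v)}\,d\xi\,d\eta$.

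\emph{Second}, I would record the quantitative features of the symbol $\Phi$. It is convex, nonnegative, and comparable to $(|\xi|+|\eta|)^\alpha$ (the lower bound by homogeneity together with the fact that $\Phi$ vanishes only at the origin). Away from the origin $\Phi$ is smooth in the $\xi$-direction whenever the segment $\{\eta-\sigma\xi:\sigma\in[0,1]\}$ avoids $0$, and more generally the $\sigma$-averaging regularizes the conical singularities: for $\alpha\in(1,2)$ one has $\Phi\in C^2$ with $|\nabla^k\Phi(\xi,\eta)|\lesssim(|\xi|+|\eta|)^{\alpha-k}$ for $k\le2$, the case $k=2$ being exactly the place where the integrability $\int_0^1|\eta-\sigma\xi|^{\alpha-2}\,d\sigma<\infty$ — i.e.\ the hypothesis $\alpha>1$ — is used. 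I would also note that for a fixed $\sigma_0\in[0,1]$ the directional derivative $(\nabla_\xi+\sigma_0\nabla_\eta)\Phi=\int_0^1(\sigma_0-\sigma)\,\alpha|\eta-\sigma\xi|^{\alpha-2}(\eta-\sigma\xi)\,d\sigma$ is strictly smaller than the generic gradient because of the signed weight $(\sigma_0-\sigma)$, and is smaller still once the $\sigma$-integral is localized to a neighbourhood of $\sigma_0$.

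\emph{Third}, I would extract the two decay factors by integration by parts on a dyadic frequency decomposition $|(\xi,\eta)|\sim2^k$. For the isotropic weight $\langle x,v\rangle^{-(d+\alpha+1)}$ one integrates by parts in $(\xi,\eta)$ along the direction $(x,v)/|(x,v)|$, each step costing $|(x,v)|^{-1}$ and producing factors that are exponentially damped where $|(\xi,\eta)|\gtrsim1$ and of size $\lesssim|(\xi,\eta)|^{\alpha-k}$ near the frequency origin (hence still integrable), the admissible number of steps being governed by the $C^2$-regularity of $\Phi$ near $0$. For the additional weight $\langle \inf_{\sigma\in[0,1]}|x-\sigma v|\rangle^{-(d+\alpha-1)}$ one instead integrates by parts along the vector field $\nabla_\xi+\sigma_0\nabla_\eta$, where $\sigma_0\in[0,1]$ is chosen — after a partition of the $\sigma$-integral defining $\Phi$ — so that $|x-\sigma_0v|$ is comparable to $\inf_{\sigma\in[0,1]}|x-\sigma v|$: this field sends the oscillatory factor to $i(x-\sigma_0v)\,e^{i(\xi\cdot x+\eta\cdot v)}$ while, by the cancellation noted above, hitting $e^{-\Phi}$ only mildly, and the constraint $\sigma_0\in[0,1]$ — the causal window of the transport $v\cdot\nabla_x$ — is exactly why the infimum ranges over $[0,1]$. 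Carrying out the two schemes simultaneously, with the $\sigma$-partition arranged so that the chosen directions are transversal, produces the product of the two weights; summing the geometric series over the low-frequency shells, using the rapid decay on the high-frequency shells, and absorbing the polynomial factor $|\xi|^m|\eta|^n$ into $e^{-\Phi/2}$ (where $|(\xi,\eta)|\gtrsim1$; it is harmless otherwise) yields the bound uniformly in $m,n$.

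\emph{Main difficulty.} The crux is the bookkeeping that turns the \emph{regionally varying} regularity of $\Phi$ — merely $C^2$ near the frequency origin, but smooth elsewhere — into the two sharp exponents $d+\alpha+1$ and $d+\alpha-1$ simultaneously: one must split the defining $\sigma$-integral of $\Phi$ so that on each piece one direction of integration by parts controls $\langle x,v\rangle$ while a transversal direction controls $\langle x-\sigma_0v\rangle$, while keeping every error term integrable near the frequency origin and tracking the interaction with the polynomial weight. It is the second-order cancellation in the directional derivative of $\Phi$ — the one place where the averaging $\int_0^1\,d\sigma$ genuinely pays off — that allows the sharp exponents to be reached, and I expect assembling this combinatorial accounting, rather than any single estimate, to be the main obstacle. (The references \cite{FG,HouZhang} reach the bound by, respectively, an explicit Fourier computation in $d=1$ and a probabilistic representation of $H$ as the joint law of an $\alpha$-stable process and its time integral.)
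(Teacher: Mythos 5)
You should first be aware that the paper does not prove this lemma at all: it is imported verbatim from \cite{HouZhang} (see also \cite{FG} for $d=1$), where the bound is obtained via a probabilistic representation of $H$ as the joint density of an $\alpha$-stable process and its time integral. So your Fourier-analytic plan is necessarily a different route, and its first step is sound: the self-similar reduction to $t=1$, with the prefactor $t^{-d-m-\frac{2d+m+n}{\alpha}}$ and the consistent rescaling of both weights, is correct, as are the elementary facts $\Phi(\xi,\eta)\sim(|\xi|+|\eta|)^{\alpha}$ and $|\nabla^{2}\Phi|\lesssim(|\xi|+|\eta|)^{\alpha-2}$ (this is where $\alpha>1$ enters).

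Beyond that, however, what you give is a plan rather than a proof, and the missing part is exactly where the content of the lemma lies: the two sharp exponents $d+\alpha+1$ and $d+\alpha-1$. Concretely: (i) $\Phi$ is smooth only off the cone $\{\eta=\sigma\xi,\ \sigma\in[0,1]\}$; on that cone third derivatives already diverge (for $\alpha<2$, $\int_0^1|s-\sigma|^{\alpha-3}d\sigma=\infty$), so on a fixed dyadic shell you cannot integrate by parts more than twice uniformly, whereas in the worst direction (say $v=0$, $|x|$ large, where $\inf_{\sigma}|x-\sigma v|=|x|$) the claimed bound amounts to decay $|x|^{-(2d+2\alpha)}$ — far more than two integrations by parts' worth. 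A further decomposition adapted to the distance to this cone is unavoidable, and the way its finite regularity converts into the direction-dependent exponents (note that in the direction $x=0$, $|v|$ large the bound only gives $|v|^{-(d+\alpha+1)}$, weaker than the isotropic stable rate $|(x,v)|^{-(2d+\alpha)}$, so the cone singularity is not a technicality but the very source of the anisotropy) is precisely the bookkeeping you defer as ``the main obstacle.'' (ii) The cancellation for the field $\nabla_\xi+\sigma_0\nabla_\eta$ is only stated qualitatively (``strictly smaller''); to reach $d+\alpha-1$ you must quantify the gain uniformly over shells and over the location of the minimizing $\sigma_0$, and verify that iterating this field does not spoil the gain already extracted for the weight $\langle x,v\rangle$ (transversality is asserted, not checked). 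As written, the sketch makes some polynomial decay plausible but does not establish the specific exponents in \eqref{z1000}, which constitute the main theorem of \cite{HouZhang}; either this dyadic/cone analysis must be carried out in full, or one should simply cite \cite{HouZhang} as the paper does.
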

			\begin{lemma}\label{L1nom}
				For $l_1+l_2< \alpha$, and $\gamma_1,\gamma_2,l_1,l_2\in(0,1),$ we have 
				\begin{align}
				&\||x|^{l_1}|v|^{l_2}\nabla^{j_1}_x\nabla^{j_2}_vH(t,x,v)\|_{L_{x,v}^1}\lesssim t^{\frac{(l_1-j_1)(1+\alpha)+l_2-j_2}{\alpha}},\label{e1}\\
				&\||x|^{l_1}|v|^{l_2}\delta_a^x\nabla^{j_1}_x\nabla^{j_2}_vH(t,x,v)\|_{L_{x,v}^1}\lesssim |a|^{\frac{l_1}{1+\alpha}}\min\{1,\frac{|a|}{t^{\frac{1+\alpha}{\alpha}}}\}^{1-l_1}t^{\frac{-j_1(1+\alpha)+l_2-j_2}{\alpha}},\label{e2}\\
				&\||x|^{l_1}|v|^{l_2}\delta_a^v\nabla^{j_1}_x\nabla^{j_2}_vH(t,x,v)\|_{L_{x,v}^1}\lesssim |a|^{l_2}\min\{1,\frac{|a|}{t^{\frac{1}{\alpha}}}\}^{1-l_2}t^{\frac{(l_1-j_1)(1+\alpha)-j_2}{\alpha}},\label{e3}\\
				&\|\delta_a^x\nabla^{j_1}_x\nabla^{j_2}_v\Lambda^{\gamma_1}_x\Lambda_v^{\gamma_2}H(t,x,v)\|_{L_{x,v}^1}\lesssim \min\{1,\frac{|a|}{t^{\frac{1+\alpha}{\alpha}}}\}t^{\frac{-(j_1+\gamma_1)(1+\alpha)-j_2-\gamma_2}{\alpha}},\label{e4}\\
				&\|\delta_a^v\nabla^{j_1}_x\nabla^{j_2}_v\Lambda^{\gamma_1}_x\Lambda_v^{\gamma_2}H(t,x,v)\|_{L_{x,v}^1}\lesssim \min\{1,\frac{|a|}{t^{\frac{1}{\alpha}}}\}t^{\frac{-(j_1+\gamma_1)(1+\alpha)-j_2-\gamma_2}{\alpha}}.\label{e5}
				\end{align}
			\end{lemma}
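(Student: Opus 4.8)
The plan is to reduce everything to the fixed profile $H(1,\cdot,\cdot)$ via the exact self-similarity of $H$. From the Fourier formula in Lemma~\ref{lemform}, the substitution $\xi\mapsto t^{-\frac{1}{\alpha}-1}\xi$, $\eta\mapsto t^{-\frac{1}{\alpha}}\eta$, $\sigma\mapsto t\sigma$ yields
\begin{equation*}
H(t,x,v)=t^{-\frac{2d}{\alpha}-d}\,H\big(1,\,t^{-\frac{1}{\alpha}-1}x,\,t^{-\frac{1}{\alpha}}v\big),
\end{equation*}
hence $\nabla_x^{j_1}\nabla_v^{j_2}H(t,x,v)=t^{-\frac{2d}{\alpha}-d-j_1(\frac{1}{\alpha}+1)-\frac{j_2}{\alpha}}\,(\nabla^{j_1}_X\nabla^{j_2}_VH)(1,t^{-\frac{1}{\alpha}-1}x,t^{-\frac{1}{\alpha}}v)$. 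Inserting this into each of \eqref{e1}--\eqref{e5} and rescaling the integration variables to $X=t^{-\frac{1}{\alpha}-1}x$, $V=t^{-\frac{1}{\alpha}}v$ (and, in the difference estimates, the shift to $A=t^{-\frac{1}{\alpha}-1}a$ for $\delta_a^x$, to $A=t^{-\frac{1}{\alpha}}a$ for $\delta_a^v$) converts every $t$-power in the statement into exactly the asserted one — this is pure bookkeeping of the Jacobian against the homogeneity degrees of the weight $|x|^{l_1}|v|^{l_2}$ and of $\nabla_x,\nabla_v,\Lambda_x,\Lambda_v$. So the lemma reduces to a family of $t$-free inequalities for $\Psi:=\nabla^{j_1}_X\nabla^{j_2}_V H(1,\cdot,\cdot)$ (respectively $\Lambda^{\gamma_1}_X\Lambda^{\gamma_2}_V\Psi$): finiteness of $\||X|^{l_1}|V|^{l_2}\Psi\|_{L^1}$ and the matching difference bounds.

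All of these follow from \eqref{z1000} at $t=1$, i.e.\ $|\Psi(X,V)|\lesssim\langle X,V\rangle^{-(d+\alpha+1)}\langle\inf_{\sigma\in[0,1]}|X-\sigma V|\rangle^{-(d+\alpha-1)}$ (the decay part of \eqref{z1000} does not depend on the order of differentiation, so this holds for all $j_1,j_2$). For \eqref{e1}: since $l_1,l_2\in(0,1)$ and $l_1+l_2<\alpha$ we bound $|X|^{l_1}|V|^{l_2}\le\langle X,V\rangle^{l_1+l_2}$ and are left with $\int\langle X,V\rangle^{l_1+l_2-d-\alpha-1}\langle\inf_\sigma|X-\sigma V|\rangle^{-(d+\alpha-1)}\,dX\,dV$, which I would estimate by splitting $(X,V)$-space: for $|X|\gtrsim2|V|$ one has $\inf_\sigma|X-\sigma V|\gtrsim|X|$, so the integrand carries genuine $\langle X\rangle^{-(2d+2\alpha)}$-type decay and is harmless; for $|X|\lesssim|V|$ the $X$-integral localizes to an $O(1)$-radius tube around the segment $\{\sigma V:\sigma\in[0,1]\}$, contributing a factor $\lesssim|V|$, and the remaining $\int_{|V|\gtrsim1}|V|^{l_1+l_2-d-\alpha}\,dV$ converges precisely because $l_1+l_2<\alpha$. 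It is the second, stronger denominator in \eqref{z1000} that makes $l_1+l_2<\alpha$ (rather than a $d$-dependent threshold) the right hypothesis.

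For \eqref{e2}--\eqref{e3} I would use, after the rescaling, the near/far dichotomy in $|A|$. When $|A|$ is small, $\delta^X_A\Psi(X)=\int_0^1A\cdot\nabla_X\Psi(X-\theta A)\,d\theta$ gives, via the subadditivity $|X|^{l_1}\le|X-\theta A|^{l_1}+|\theta A|^{l_1}$ (valid as $l_1<1$) and a re-translation, $\||X|^{l_1}|V|^{l_2}\delta^X_A\Psi\|_{L^1}\lesssim|A|\,(\||X|^{l_1}|V|^{l_2}\nabla_X\Psi\|_{L^1}+|A|^{l_1}\||V|^{l_2}\nabla_X\Psi\|_{L^1})$, both norms finite by the previous step; when $|A|$ is large, the triangle inequality plus the same subadditivity of the weight suffices. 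Combining the regimes and undoing the scaling produces the $|a|$-factors and $\min\{1,\cdot\}$ cut-offs (the $v$-difference \eqref{e3} is identical under $X\leftrightarrow V$). For \eqref{e4}--\eqref{e5} I would avoid pointwise bounds on fractional derivatives of $H$ altogether: writing $\Lambda^{\gamma_1}_x g=c_{d,\gamma_1}\,\mathrm{p.v.}\int\delta^x_h g\,|h|^{-d-\gamma_1}\,dh$ (and likewise for $\Lambda^{\gamma_2}_v$), I integrate the unweighted ($l_1=l_2=0$) version of \eqref{e2}--\eqref{e3} against $|h|^{-d-\gamma_1}\,dh$; the integral $\int\min\{1,|h|/t^{(1+\alpha)/\alpha}\}\,|h|^{-d-\gamma_1}\,dh$ converges at $h=0$ since $\gamma_1<1$ and at infinity since $\gamma_1>0$, and equals $\lesssim t^{-\gamma_1(1+\alpha)/\alpha}$ by scaling — exactly the needed power — while the outer $\delta^x_a$ is absorbed by applying the same dichotomy to $\delta^x_a\delta^x_h$ (bounded by $\|\delta^x_h\cdot\|$, by $\|\delta^x_a\cdot\|$, or by $|a||h|\,\|\nabla^2_x\cdot\|$, whichever is smallest).

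The main obstacle I anticipate is carrying out the weighted-integrability estimate of the second paragraph uniformly across the whole admissible range of $(l_1,l_2,j_1,j_2)$ and across all the combinations occurring in \eqref{e2}--\eqref{e5}: the tube decomposition around $\{\sigma V:\sigma\in[0,1]\}$ must be arranged so that the two denominators of \eqref{z1000} are used in tandem (each one alone fails in high dimension), and the nonlocal route for \eqref{e4}--\eqref{e5} requires tracking which of the competing bounds for $\delta^x_a\delta^x_h$ dominates in each subregion of the $h$-integral. The rest — the self-similar reduction and the mean-value/triangle arguments — is routine.
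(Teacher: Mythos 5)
Your proposal is correct in substance, and for the central estimates \eqref{e2}--\eqref{e3} it coincides with the paper's own argument: a mean-value bound for small shifts, the triangle inequality for large shifts, subadditivity of the weight $|x|^{l_1}$, all resting on the weighted $L^1$ bound \eqref{e1}. The differences are at the two ends. For \eqref{e1} the paper does not argue at all --- it simply cites Remark 1.6 of Hou--Zhang --- whereas you rederive it from the pointwise bound \eqref{z1000} via the self-similar reduction $H(t,x,v)=t^{-d-\frac{2d}{\alpha}}H(1,t^{-1-\frac{1}{\alpha}}x,t^{-\frac{1}{\alpha}}v)$ and a tube decomposition around the segment $\{\sigma V:\sigma\in[0,1]\}$; this makes the lemma self-contained and correctly identifies $l_1+l_2<\alpha$ as the threshold produced by the second factor in \eqref{z1000}, which is a genuine (if modest) addition. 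For \eqref{e4}--\eqref{e5} the paper routes through the $L^1$ interpolation inequality \eqref{interpo}, $\|\Lambda_v^{\gamma}f\|_{L^1_{x,v}}\lesssim\|f\|_{L^1_{x,v}}^{1-\gamma}\|\nabla_v f\|_{L^1_{x,v}}^{\gamma}$, applied to $\delta^x_a\nabla^{j_1}_x\nabla^{j_2}_vH$ (the inequality itself being proved by exactly the splitting of the finite-difference representation of $\Lambda^{\gamma}$ that you propose); you instead apply that representation directly to the kernel and integrate the unweighted $\min\{1,\cdot\}$ bounds in $h$. These are essentially the same computation, the paper's version being slightly cleaner because interpolation decouples the fractional derivatives from the outer difference, while your version needs bounds on double and, for the mixed $\Lambda_x^{\gamma_1}\Lambda_v^{\gamma_2}$ case, triple differences such as $\delta^x_a\delta^x_h\delta^v_kH$; these follow from the same near/far dichotomy because \eqref{z1000} holds for every derivative order, and the minimum of your competing bounds is indeed comparable to the product of the individual $\min\{1,\cdot\}$ factors, so this is extra bookkeeping rather than a gap --- though you should carry it out explicitly for the mixed case, which your sketch leaves implicit. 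One caveat that applies to your argument and to the paper's proof alike: what is actually established in \eqref{e2} is the scale-invariant factor $|a|^{l_1}\min\{1,|a|t^{-\frac{1+\alpha}{\alpha}}\}^{1-l_1}$, equivalently $\min\{|a|\,t^{\frac{(l_1-1)(1+\alpha)}{\alpha}},|a|^{l_1}\}$; the prefactor $|a|^{\frac{l_1}{1+\alpha}}$ in the displayed statement is inconsistent with the scaling \eqref{scale} and with the paper's own computation, so it appears to be a typo and you should not contort your bookkeeping to reproduce it literally.
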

			\begin{proof} 
				The estimate \eqref{e1} follows from Remark 1.6 in \cite{HouZhang}. Then we estimate \eqref{e2}. Note that 
				\begin{align*}
				\delta_a^xH(t,x,v)=\int_0^1a\cdot \nabla_xH(t,x-\lambda a,v)d\lambda,\quad \delta_a^vH(t,x,v)=\int_0^1a\cdot \nabla_vH(t,x,v-\lambda a)d\lambda.
				\end{align*}
				For the case when $|a|\leq t^{\frac{1+\alpha}{\alpha}}$, we have
				\begin{equation}\label{e21}
				\begin{aligned}
				\||x|^{l_1}|v|^{l_2}\delta_a^x\nabla^{j_1}_x\nabla^{j_2}_vH(t,x,v)\|_{L_{x,v}^1}&\lesssim \left\||x|^{l_1}|v|^{l_2}\nabla^{j_1}_x\nabla^{j_2}_v\int_0^1a\cdot \nabla_xH(t,x-\lambda a,v)d\lambda\right\|_{L_{x,v}^1}\\
				&\lesssim \left\|\int_0^1a\cdot (|x-\lambda a|^{l_1}+|a|^{l_1})|v|^{l_2}\nabla^{j_1+1}_x\nabla^{j_2}_vH(t,x-\lambda a,v)d\lambda\right\|_{L_{x,v}^1}\\
				&\lesssim |a|t^{\frac{(l_1-j_1-1)(1+\alpha)+(l_2-j_2)}{\alpha}},
				\end{aligned}
				\end{equation}
				where we applied \eqref{e1} in the last inequality.
				For the other case $|a|\geq t^{\frac{1+\alpha}{\alpha}}$, we have
				\begin{align*}
				\||x|^{l_1}|v|^{l_2}\delta_a^x\nabla^{j_1}_x\nabla^{j_2}_vH(t,x,v)\|_{L_{x,v}^1}&\lesssim \left\||x|^{l_1}|v|^{l_2}\nabla^{j_1}_x\nabla^{j_2}_v\left(H(t,x,v)-H(t,x-a,v)\right)\right\|_{L_{x,v}^1}\\
				&\lesssim \left\|(|x|^{l_1}+|a|^{l_1})|v|^{l_2}\nabla^{j_1}_x\nabla^{j_2}_vH(t,x,v)\right\|_{L_{x,v}^1}\\
				&\lesssim |a|^{l_1}t^{\frac{-j_1(1+\alpha)+(l_2-j_2)}{\alpha}}.
				\end{align*}
				Combining this with \eqref{e21}, we obtain \eqref{e2}. The estimate \eqref{e3} follows similarly. Finally, 
				the estimates \eqref{e4} and \eqref{e5} follow from \eqref{e2}, \eqref{e3} and  the following interpolation inequalities, 
				\begin{align}\label{interpo}
				&\|\Lambda_v^\gamma f\|_{L^1_{x,v}}\lesssim \|f\|_{L^1_{x,v}}^{1-\gamma}\|\nabla_v f\|_{L^1_{x,v}}^\gamma,\quad &&\|\Lambda_x^\gamma f\|_{L^1_{x,v}}\lesssim \|f\|_{L^1_{x,v}}^{1-\gamma}\|\nabla_x f\|_{L^1_{x,v}}^\gamma,\quad &&&\gamma\in(0,1).
				\end{align}
				Note that $\Lambda^\gamma_vf(t,x,v)=c\int_{\mathbb{R}^d}\frac{\delta_b^vf(t,x,v)}{|b|^{d+\gamma}}db$, then
				\begin{equation*}
				\begin{aligned}
				\|\Lambda^\gamma_vf(x,v)\|_{L_{x,v}^1}&\lesssim\left\|\int_{\mathbb{R}^d}\frac{\delta_b^vf(x,v)}{|b|^{d+\gamma}}db\right\|_{L_{x,v}^1}\lesssim \left\|\int_{|b|\leq \lambda}\frac{\delta_b^vf(x,v)}{|b|^{d+\gamma}}db\right\|_{L_{x,v}^1}+\left\|\int_{|b|\geq \lambda}\frac{\delta_b^vf(x,v)}{|b|^{d+\gamma}}db\right\|_{L_{x,v}^1}\\
				&\lesssim\left\|\int_{|b|\leq \lambda}\frac{\int_0^1b\cdot \nabla_vf(x,v-\lambda b)d\lambda}{|b|^{d+\gamma}}db\right\|_{L_{x,v}^1}+\left\|\int_{|b|\geq \lambda}\frac{\delta_b^vf(x,v)}{|b|^{d+\gamma}}db\right\|_{L_{x,v}^1}\\
				&\lesssim \lambda^{1-\gamma}\|\nabla_vf\|_{L_{x,v}^1}+\lambda^{-\gamma}\|f\|_{L_{x,v}^1}.
				\end{aligned}
				\end{equation*}
				By taking $\lambda=\|f\|_{L_{x,v}^1}\|\nabla_vf\|_{L_{x,v}^1}^{-1}$, we get
				\begin{equation*}
				\|\Lambda_v^\gamma f\|_{L_{x,v}^1}\lesssim \|f\|_{L_{x,v}^1}^{1-\gamma}\|\nabla_vf\|_{L_{x,v}^1}^\gamma,
				\end{equation*}
				and the proof for  $\Lambda_x^\gamma$  is similar, so we have proved \eqref{interpo}. Combining \eqref{interpo} with \eqref{e2} and \eqref{e3}, we  obtain \eqref{e4} and \eqref{e5}. This completes the proof of the lemma. 
			\end{proof}\vspace{0.2cm}\\
			Note that $\alpha+\beta<3$ and $2\alpha+\beta>3$, choose $\alpha-1<\gamma<2-\beta<\alpha$. Define the norm
			\begin{align*}
			\|f\|_{X_T}=\sup_{t\in[0,T]}(t^{\kappa}\|f(t)\|_{L^\infty_{x,v}}+ \sum_{m+n\leq A} t^{\kappa +m+\frac{m+n+\gamma}{\alpha}}\|\nabla_x^m\nabla_v^n f(t)\|_{\dot C^{\frac{\gamma}{1+\alpha},\gamma}_{x,v}}),
			\end{align*}
			where $\kappa$ is fixed in \eqref{scale}, and 
			\begin{equation*}
			\|f\|_{\dot C_{v,x}^{\frac{\gamma}{1+\alpha},\gamma}}:=\sup_{a}\frac{\|\delta_a^xf\|_{L_{x,v}^\infty}}{|a|^{\frac{\gamma}{1+\alpha}}}+\sup_{b}\frac{\|\delta_b^vf\|_{L_{x,v}^\infty}}{|b|^{\gamma}}.
			\end{equation*}
			We shortly denote $X=X_\infty$. For any $g$ with $\|g\|_X\leq \epsilon$, where $\epsilon>0$ is a constant that will be fixed later in the proof, we construct a map $\mathcal{S}:g\to h$, where $h$ is the solution to the Cauchy problem 
			\begin{equation}\label{def:map}
			\begin{aligned}
			&\partial_t h+v\cdot \nabla_x h+\Lambda^{\alpha}_v h=\operatorname{div}_vF[g],\\
			&h|_{t=0}=f_0,
			\end{aligned}
			\end{equation}
			where $F[g]=g\nabla_v \Lambda^{-\beta}_vg $. By Lemma \ref{lemform},
			we have 
			\begin{equation}\label{hhh}
			\begin{aligned}
			h(t,x,v)&=\iint_{\mathbb{R}^{2d}} H(t,x-y-tv,v-u)f_0(y,u)dydu\\
			&\quad\quad+\int_0^t \iint_{\mathbb{R}^{2d}} H(t-\tau,x-(t-\tau)v,v-u) \operatorname{div}_uF[ g](\tau,y,u)dydu d\tau\\
			&=h_L(t,x,v)+h_N(t,x,v).
			\end{aligned}
			\end{equation}
			For any data $f_0$, we define the norm $[f_0]_1$ as follows
			\begin{align*}\label{def:norm}
			[f_0]_{1}:=\|h_L\|_{X}.
			\end{align*}
			\begin{lemma}\label{lemlin} There holds 
				\begin{equation*}
				[f_0]_{1}\lesssim   [f_0].
				\end{equation*}
			\end{lemma}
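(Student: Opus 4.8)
The plan is to show that $\|h_L\|_X \lesssim [f_0]$ by recognizing that $h_L(t,x,v) = (H(t)\ast f_0)(x-tv,v)$ is, up to the free transport shift, a convolution of $f_0$ against the kernel $H(t)$, while $[f_0]$ controls $\|e^{-tP}f_0\|_{L^\infty}$; the heat-type kernel of $e^{-tP}$ has symbol $e^{-t(|\xi|^{2/(1+\alpha)}+|\eta|^2)^{\alpha/2}}$, which is comparable (in Fourier side and in the relevant scaling) to $\exp(-\int_0^t |\sigma\xi-\eta|^\alpha\,d\sigma)$ after the transport change of variables $\eta \mapsto \eta - t\xi$. So the first step is to write $h_L(t,x,v)$ as an operator applied to $f_0$ that factors through $e^{-(t/2)P}$ composed with a bounded (in the right norms) remaining piece.

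Concretely, I would split $H(t) = H(t/2)\ast H(t/2)$ using the semigroup property (which follows directly from the Fourier representation in Lemma \ref{lemform}, since the symbol of $H$ evolves as an ODE flow), so that for any derivative count $m+n\le A$ and Hölder seminorm in $\dot C^{\gamma/(1+\alpha),\gamma}_{x,v}$,
\begin{align*}
\nabla_x^m\nabla_v^n h_L(t) &= \big(\nabla_x^m\nabla_v^n H(t/2)\big)\ast \big(H(t/2)\ast f_0\big),
\end{align*}
composed with the free-transport substitution $(x,v)\mapsto(x-tv,v)$, which is an $L^\infty$-isometry and interacts with the anisotropic Hölder seminorm only through the elementary inequality $|x-tv - (x'-tv')|\le |x-x'| + t|v-v'|$ (handled by the extra time weights and the constraint $\gamma<\alpha$). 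The quantity $\|H(t/2)\ast f_0\|_{L^\infty_{x,v}}$ is bounded by $C t^{-\kappa}[f_0]$: indeed $H(t/2)\ast f_0$ evaluated along the transport characteristics is exactly $e^{-(t/2)P}f_0$ up to harmless constants, so by definition \eqref{z1} one gets the bound $\lesssim t^{-(1+\frac{\beta-2}{\alpha})}[f_0] = t^{-\kappa}[f_0]$. Then the contribution of the remaining factor $\nabla_x^m\nabla_v^n H(t/2)$ is absorbed using the $L^1$ kernel bounds of Lemma \ref{L1nom}: Young's inequality gives
\begin{align*}
\|\nabla_x^m\nabla_v^n h_L(t)\|_{L^\infty_{x,v}} \lesssim \|\nabla_x^m\nabla_v^n H(t/2)\|_{L^1_{x,v}}\,\|H(t/2)\ast f_0\|_{L^\infty_{x,v}} \lesssim t^{-m-\frac{m+n}{\alpha}}\cdot t^{-\kappa}[f_0],
\end{align*}
which matches the weight $t^{\kappa+m+\frac{m+n}{\alpha}}$ in the $X$-norm (and for the $\dot C^{\gamma/(1+\alpha),\gamma}$-seminorm one uses \eqref{e4}, \eqref{e5} — or rather the difference versions $\delta_a^x$, $\delta_a^v$ applied to $\nabla_x^m\nabla_v^n H(t/2)$ — to pick up the extra factor $\min\{1,|a|/t^{(1+\alpha)/\alpha}\}$ resp.\ $\min\{1,|a|/t^{1/\alpha}\}$, interpolated down to the exponents $\gamma/(1+\alpha)$ and $\gamma$, producing the weight $t^{\kappa+m+\frac{m+n+\gamma}{\alpha}}$).

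The main obstacle I anticipate is making the identification "$H(t/2)\ast f_0$ along characteristics $= c\, e^{-(t/2)P}f_0$" precise, since the two symbols $\exp(-\int_0^{t}|\sigma\xi-\eta|^\alpha d\sigma)$ and $e^{-t(|\xi|^{2/(1+\alpha)}+|\eta|^2)^{\alpha/2}}$ are not literally equal — only comparable up to two-sided bounds of the form $c_1 e^{-C_1 tP(\xi,\eta)} \le |\widehat{H}(t,\xi,\eta)| \le c_2 e^{-C_2 tP(\xi,\eta)}$ after the transport shift. The clean way around this is to not insist on the semigroup $e^{-tP}$ at all, but to prove directly the pointwise bound $\|H(t)\ast f_0\|_{L^\infty_{x,v}} \lesssim t^{-\kappa}[f_0]$ by a self-contained Fourier/kernel comparison argument: dominate $|\widehat{H}(t,\xi,\eta)|$ by $e^{-ctP(\xi,\eta)}$ using that the $\inf_\sigma$ term in \eqref{z1000} forces enough decay, then split $e^{-tP} = e^{-(t/2)P}e^{-(t/2)P}$ on the $P$-side where the semigroup structure is exact, reducing to the scaling estimate $\|K_{t/2}\|_{L^1}\lesssim 1$ for the kernel $K_s$ of $e^{-sP}$ and the definition of $[f_0]$. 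Everything else is bookkeeping with the homogeneity exponents, which are forced to line up by the scaling invariance \eqref{scale} noted in the remark.
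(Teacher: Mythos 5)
Your overall architecture (factor the linear evolution through the model semigroup $e^{-sP}$, then use $L^1$ kernel bounds and the definition \eqref{z1}) is in the same spirit as the paper, but the central bridging step has a genuine gap. You propose to pass from $H(t/2)\ast f_0$ to $e^{-(t/2)P}f_0$ by dominating the symbols pointwise, $|\widehat{H}(t,\xi,\eta)|\lesssim e^{-ctP(\xi,\eta)}$. Pointwise comparison of Fourier multipliers does not transfer $L^\infty$ bounds: since $f_0$ is only controlled through $\sup_t t^{\kappa}\|e^{-tP}f_0\|_{L^\infty}$, you need an \emph{operator-level} statement, namely that the quotient multiplier $\widehat{H}(t,\xi,\eta)\,e^{(t/2)P(\xi,\eta)}$ (or some analogous composition) has an $L^1$ kernel, so that $H(t)\ast f_0=\bigl[\mathcal{F}^{-1}\bigl(\widehat{H}(t)e^{(t/2)P}\bigr)\bigr]\ast e^{-(t/2)P}f_0$ can be estimated by Young's inequality. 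Your proposed reduction "to $\|K_{t/2}\|_{L^1}\lesssim 1$ for the kernel $K_s$ of $e^{-sP}$" addresses the wrong object: boundedness of the kernel of $e^{-sP}$ itself, combined with a pointwise symbol bound on $\widehat H$, does not yield $\|H(t)\ast f_0\|_{L^\infty}\lesssim \|e^{-(t/2)P}f_0\|_{L^\infty}$. This is exactly the difficulty the paper circumvents differently: it rescales to $t=1$, shows via \eqref{z1000} that the operator with symbol $\langle\xi,\eta\rangle^{200N}\exp(-\int_0^1|\eta+\tau\xi|^\alpha d\tau)$ is bounded on $L^\infty$ (so all derivatives of $h_L(1)$ are controlled by $\|\langle\nabla_x,\nabla_v\rangle^{-200N}f_0\|_{L^\infty}$), and then converts $[f_0]$ into that negative-order norm through the Laplace-transform identity $(1+P)^{-[4/\alpha]}=c^{-1}\int_0^\infty t^{[4/\alpha]-1}e^{-t}e^{-tP}\,dt$, which is an exact operator identity rather than a symbol comparison.

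A secondary, fixable error: the splitting $H(t)=H(t/2)\ast H(t/2)$ is false. From the Fourier formula in Lemma \ref{lemform} one has $\widehat H(t,\xi,\eta)=\widehat H(t/2,\xi,\eta)\,\widehat H(t/2,\xi,\eta-\tfrac{t}{2}\xi)$, i.e.\ the Chapman--Kolmogorov property holds only for the full solution operator including the Galilean shear $(x,v)\mapsto(x-\tfrac{t}{2}v,v)$, not for the plain convolution kernel. If you repair both points — use the sheared semigroup identity (or work at $t=1$ and rescale, as the paper does) and replace the symbol-domination step by an $L^1$ bound for the appropriate quotient kernel or by the resolvent/Laplace-transform argument — your outline, including the treatment of the Hölder seminorms via the difference estimates of Lemma \ref{L1nom}, would go through.
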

			\begin{proof} To complete the proof, by time rescaling, it suffices to show that
				\begin{equation}\label{z2000}
				\sum_{m+n\leq 2A} \|\nabla_x^m\nabla_v^n h_L(1)||_{L^\infty_{x,v}}\lesssim [f_0].
				\end{equation}
				From the definition of $h_L(1)$,  we can express its Fourier transform as \begin{align*}\label{z2}
				\mathcal{F}_{x,v}(h_L(1))(\xi,\eta) = \exp\left(-\int_0^1 |\eta+\tau\xi|^\alpha d\tau\right)\hat f_0(\xi,\eta) .
				\end{align*}
				Let
				\begin{equation*}
				\tilde{P}(\xi,\eta)=\langle \xi,\eta\rangle^{200N}\exp\left(-\int_0^1 |\eta+\tau\xi|^\alpha d\tau\right),
				\end{equation*}
				for $N=[\frac{10 d}{\alpha^4}]$.
				By \eqref{z1000}, we have  for any $g_0=g_0(x,v)$
				\begin{align*}
				||\nabla_{x,v}^n\tilde{P}(\nabla_x,\nabla_v) g_0||_{L^\infty_{x,v}}\lesssim ||g_0||_{L^\infty_{x,v}}~~\forall n\in \mathbb{N}.
				\end{align*}
				Using the Fourier representation of $h_L(1)$, we obtain \begin{align}\label{s1}
				\sum_{m+n\leq 2A} \|\nabla_x^m\nabla_v^n h_L(1)||_{L^\infty_{x,v}}\lesssim ||\langle \nabla_x,\nabla_v\rangle^{-200N}f_0||_{L^\infty_{x,v}}.
				\end{align}
				Since \begin{align*}
				\int_0^\infty t^{[\frac{4}{\alpha}]-1}e^{-t}e^{-at }dt=c(1+a)^{-[\frac{4}{\alpha}]},\end{align*}
				let $P$ be the operator defined in \eqref{z1}, we get 
				\begin{align*}
				\|(1+P)^{-[\frac{4}{\alpha}]}f_0\|_{L^\infty_{x,v}}=c^{-1}\left\| \int_0^\infty t^{[\frac{4}{\alpha}]-1}e^{-(1+P)t }f_0dt\right\|_{L^\infty_{x,v}}\lesssim [f_0].
				\end{align*}
				This leads to  
				\begin{align*}
				||\langle \nabla_x,\nabla_v\rangle^{-200 N}f_0||_{L^\infty_{x,v}}\lesssim [f_0].
				\end{align*}
				Combining this with \eqref{s1}, we obtain \eqref{z2000}.
				The proof is complete.
			\end{proof}\vspace{0.3cm}\\
			Denote $\gamma_0=\gamma+1-\alpha$, we have $0<\gamma_0<\min\{1,\gamma\}$.  We prove the following lemma to control the nonlinear term  $h_N$.
			\begin{lemma}\label{lemnl}
				Denote 
				\begin{align*}
				\mathcal{M}F(t,x,v)=\int_0^t \iint_{\mathbb{R}^{2d}}\nabla_uH(t-\tau,x-y-(t-\tau)v,v-u)\cdot F(\tau,y,u)dydud\tau.
				\end{align*}
				Then 
				\begin{align*}
				\|\mathcal{M}F\|_{X}\lesssim |||F|||,
				\end{align*}
				where 
				\begin{align*}
				|||F|||:=\sup_{t>0}\sum_{m+n\leq A}t^{\kappa+1+\frac{m(1+\alpha)+n-1+\gamma_0}{\alpha}}\|\nabla^m_x\nabla^n_vF(t)\|_{\dot C^{\frac{\gamma_0}{1+\alpha},\gamma_0}_{x,v}	}.
				\end{align*}
			\end{lemma}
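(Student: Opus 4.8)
The plan is to reduce everything to $L^1$ bounds on derivatives of the kernel $\nabla_u H$ and then convolve against $F$. By the time-rescaling invariance built into the norms $\|\cdot\|_X$ and $|||\cdot|||$, it suffices to estimate the norm of $\mathcal{M}F$ at fixed time, say $t=1$; the scaling $f_\lambda$ from \eqref{scale} and the explicit powers of $t$ in the definitions of $X$ and $|||\cdot|||$ are designed so that the two sides scale the same way, so a single inequality at $t=1$ propagates. Concretely, I would write, for any fixed $t$ and any $(m,n)$ with $m+n\le A$,
\begin{align*}
\nabla_x^m\nabla_v^n\mathcal{M}F(t)=\int_0^t\iint \nabla_u\nabla_x^m\nabla_v^n H(t-\tau,x-y-(t-\tau)v,v-u)\cdot F(\tau,y,u)\,dy\,du\,d\tau,
\end{align*}
but here one must be careful: $\nabla_v$ hitting the first slot of $H$ produces an extra $x$-derivative through the transport term $x-(t-\tau)v$, which is exactly why the exponent $m(1+\alpha)+n$ (rather than $m+n$) appears and why the anisotropic weights $|x|^{l_1}|v|^{l_2}$ in Lemma \ref{L1nom} are needed. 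So I would first expand $\nabla_v^n$ acting on $H(t-\tau,x-(t-\tau)v,v-u)$ via the chain rule, getting a sum of terms $(t-\tau)^{k}\nabla_x^{m+k}\nabla_v^{n-k}\nabla_u H$ for $0\le k\le n$, and track each separately.

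The core estimates are the $L^\infty$ bound and the two Hölder seminorm bounds making up $\|\cdot\|_{\dot C^{\gamma/(1+\alpha),\gamma}_{x,v}}$. For the $L^\infty$ bound: since $F$ is not assumed bounded but only Hölder, I would use the mean-zero-type structure — i.e., write the $x$- or $v$-difference of $F$ and pair it against $\nabla_u H$ using that $\iint \nabla_u H\,dy\,du$ is a total derivative, so that $\|\nabla_x^m\nabla_v^n\mathcal{M}F(t)\|_{L^\infty}$ is controlled by $\int_0^t \|\nabla_u H(t-\tau)\|$ in a weighted $L^1$ (or a $\delta_a$-version of it) times the Hölder seminorm of the appropriate derivative of $F(\tau)$, with the weights $|x|^{\gamma/(1+\alpha)}$ or $|v|^{\gamma}$ supplied by Lemma \ref{L1nom} estimate \eqref{e1}. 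The time integral then becomes $\int_0^t (t-\tau)^{a}\tau^{-b}\,d\tau$, a Beta-function integral that converges precisely because of the strict inequalities $\alpha-1<\gamma<2-\beta<\alpha$ and $0<\gamma_0<\min\{1,\gamma\}$; this convergence check is where I expect to spend most of the bookkeeping. For the Hölder seminorms of $\mathcal{M}F$, I would split the $a$-difference $\delta_a^x$ (or $\delta_b^v$) between the kernel and use estimates \eqref{e2}–\eqref{e3} (the ones with the $\min\{1,|a|/t^{(1+\alpha)/\alpha}\}^{1-l}$ factors), which produce exactly the gain $|a|^{\gamma/(1+\alpha)}$ or $|b|^\gamma$ after another Beta integral; alternatively, for the part of the time integral near $\tau=t$ one keeps $\delta_a$ on $F$ and uses $\gamma_0<\gamma$, and for the part near $\tau=0$ one moves $\delta_a$ onto $H$.

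The main obstacle, and the point that needs the most care, is the interplay between the transport shift $x-(t-\tau)v$ and the differences/weights: a spatial increment $\delta_a^x$ of $\mathcal{M}F$ and a velocity increment $\delta_b^v$ each interact with the kernel's dependence on $x-(t-\tau)v$, so $\delta_b^v$ acting on the kernel produces not only a genuine $v$-difference but also an $x$-difference of size $(t-\tau)|b|$, which must be absorbed using the second factor $\langle t^{-1/\alpha-1}\inf_\sigma|x-\sigma tv|\rangle^{-(d+\alpha-1)}$ in the pointwise bound \eqref{z1000} — this is precisely the role of the "infimum" factor. Making this quantitative requires either a direct argument on \eqref{z1000} or a variant of Lemma \ref{L1nom} tracking mixed $\delta_a^x$-and-shift terms; I would handle it by splitting into the regimes $|b|\le (t-\tau)^{1/\alpha}$ and $|b|>(t-\tau)^{1/\alpha}$ (resp.\ $|a|\le (t-\tau)^{(1+\alpha)/\alpha}$) and invoking the appropriate case of \eqref{e2} or \eqref{e3}. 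Once all terms are reduced to convergent Beta integrals with the correct total power of $t$, summing over $m+n\le A$ and over the finitely many chain-rule terms yields $\|\mathcal{M}F\|_X\lesssim |||F|||$, completing the proof.
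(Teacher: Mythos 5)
Your proposal follows essentially the same route as the paper's proof: the zero-mean property $\iint\nabla_u H\,dy\,du=0$ to pair the kernel against a H\"older difference of $F$, the chain-rule terms $(t-\tau)^{l}\nabla_x^{m+l}\nabla_v^{n-l}$ from the transport slot, the weighted $L^1$ kernel bounds \eqref{e1}--\eqref{e3} of Lemma \ref{L1nom}, a split of the time integral (derivatives and increments on $H$ near $\tau=0$, on $F$ near $\tau=t$), the treatment of the $v$-increment's induced $y$-shift $\delta^y_{-(t-\tau)a}H$ via \eqref{e2} rather than any new kernel estimate, and convergent Beta integrals from $\alpha-1<\gamma<2-\beta$ and $0<\gamma_0<\gamma$. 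Only cosmetic deviations: the paper works directly with the $t$-weighted norms instead of rescaling to $t=1$, and the weights pairing with $F$ should carry the exponent $\gamma_0$ (resp.\ $\gamma_0/(1+\alpha)$), matching the regularity encoded in $|||F|||$, not $\gamma$.
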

			\begin{proof} Note that 
				\begin{align*}
				\iint_{\mathbb{R}^{2d}}\nabla_uH(t-\tau,x-y-(t-\tau)v,v-u)dydu=0,
				\end{align*}
				hence, we have 
				\begin{align*}
				\mathcal{M}F(t,x,v)=\int_0^t \iint_{\mathbb{R}^{2d}}\nabla_uH(t-\tau,x-y-(t-\tau)v,v-u)\cdot (F(\tau,y,u)-F(\tau,y,v))dydud\tau.
				\end{align*}
				From this and Lemma \ref{L1nom}, we obtain 
				\begin{align*}
				\|\mathcal{M}F(t)\|_{L^\infty} &\lesssim \int_0^t \||u|^{\gamma_0} \nabla_u H(t-\tau, y, u)\|_{L_{u,y}^1} \|F(\tau)\|_{L_y^\infty \dot{C}_u^{\gamma_0}} \, d\tau\\
				&\lesssim |||F||| \int_0^t (t-\tau)^{\frac{\gamma_0-1}{\alpha}} \tau^{-\kappa - 1 + \frac{1-\gamma_0}{\alpha}} \, d\tau.
				\end{align*}
				Since \( -\kappa - 1 + \frac{1-\gamma_0}{\alpha} = -1 + \frac{2-\beta-\gamma}{\alpha} > -1 \), the integral converges, leading to:
				\begin{equation}\label{lem61}
				\|\mathcal{M}F(t)\|_{L^\infty} \lesssim t^{-\kappa} |||F|||.
				\end{equation}
				Then we consider the higher order Hölder norms, we first analyze \(\delta_a^x \nabla_x^{m_1} \nabla_v^{m_2} \mathcal{M}F\). Using the definition of \(\mathcal{M}F\), we write:
				\[
				\delta_a^x \nabla_x^{m_1} \nabla_v^{m_2} \mathcal{M}F = (-1)^{m_2} \int_0^t \iint_{\mathbb{R}^{2d}} \sum_{l_1+l_2=m_2} (t-\tau)^{l_1} \delta_a^x \nabla_x^{m_1+l_1} \nabla_u^{l_2} H(t-\tau, x-(t-\tau)v-y, v-u) \operatorname{div}_u F(\tau, y, u) \, dy \, du \, d\tau.
				\]
				Similarly, for \(\delta_a^v \nabla_x^{m_1} \nabla_v^{m_2} \mathcal{M}F\), we have:
				\[
				\begin{aligned}
				&\delta_a^v \nabla_x^{m_1} \nabla_v^{m_2} \mathcal{M}F \\
				&\quad = (-1)^{m_2} \int_0^t \iint_{\mathbb{R}^{2d}} \sum_{l_1+l_2=m_2} (t-\tau)^{l_1} \delta_{-a}^u \nabla_x^{m_1+l_1} \nabla_u^{l_2} H(t-\tau, x-(t-\tau)(v-a)-y, v-u) \operatorname{div}_u F(\tau, y, u) \, dy \, du \, d\tau \\
				&\quad + (-1)^{m_2} \int_0^t \iint_{\mathbb{R}^{2d}} \sum_{l_1+l_2=m_2} (t-\tau)^{l_1} \delta_{-(t-\tau)a}^y \nabla_x^{m_1+l_1} \nabla_u^{l_2} H(t-\tau, x-(t-\tau)v-y, v-u) \operatorname{div}_u F(\tau, y, u) \, dy \, du \, d\tau.
				\end{aligned}
				\]
				1) Hölder Estimate in \(x\):\\
				For the \(x\)-Hölder norm, we estimate:
				\begin{equation}\label{lem62}
				\|\delta_a^x \nabla_x^{m_1} \nabla_v^{m_2} \mathcal{M}F(t)\|_{L^\infty} \lesssim I_1 + I_2,
				\end{equation}
				where
				\[
				I_1 = \int_0^{\frac{t}{2}} \sum_{l_1+l_2=m_2} \left\||u|^{\gamma_0} (t-\tau)^{l_1} \nabla_y^{m_1+l_1} \nabla_u^{l_2} \nabla_u \delta_{-a}^y H(t-\tau, y, u)\right\|_{L_{y,u}^1} \|F(\tau)\|_{L_y^\infty \dot{C}_u^{\gamma_0}} \, d\tau,
				\]
				and
				\[
				I_2 = \int_{\frac{t}{2}}^t \sum_{l_1+l_2=m_2} \left\||u|^{\gamma_0} (t-\tau)^{l_1} \delta_{-a}^y \nabla_u H(t-\tau, y, u)\right\|_{L_{y,u}^1} \|\nabla_u^{l_2} \nabla_y^{m_1+l_1} F(\tau)\|_{L_y^\infty \dot{C}_u^{\gamma_0}} \, d\tau.
				\]
				Applying the kernel estimates in Lemma \ref{L1nom}, we deduce:
				\[
				\|\delta_a^x \nabla_x^{m_1} \nabla_v^{m_2} \mathcal{M}F(t)\|_{L^\infty} \lesssim |a|^{\frac{\gamma}{1+\alpha}} t^{-\frac{(1+\alpha)m_1 + m_2 + \gamma}{\alpha} - \kappa} |||F|||. 
				\]
				2) Hölder Estimate in \(v\): \\
				For the \(v\)-Hölder norm, the estimate follows similarly.  We have 
				\begin{equation*}
				\|\delta_a^v\nabla_x^{m_1}\nabla_v^{m_2}\mathcal{M}F(t)\|_{L^\infty}\lesssim \sum_{i=1}^4II_i,
				\end{equation*}
				with
				\begin{align*}
				&II_1=\int_0^\frac{t}{2}\sum_{l_1+l_2=m_2}\left\||u|^{\gamma_0}(t-\tau)^{l_1}\nabla_y^{m_1+l_1}\nabla_u^{l_2}\nabla_u\delta_{-(t-\tau)a}^yH(t-\tau,y,u)\right\|_{L_{y,u}^1}\|F(\tau)\|_{L_y^\infty \dot C^{\gamma_0}_u}d\tau,\\
				& II_2=\int_0^\frac{t}{2}\sum_{l_1+l_2=m_2}\left\||u|^{\gamma_0}(t-\tau)^{l_1}\nabla_y^{m_1+l_1}\nabla_u^{l_2}\nabla_u\delta_{-a}^uH(t-\tau,y,u)\right\|_{L_{y,u}^1}\|F(\tau)\|_{L_y^\infty\dot C_u^{\gamma_0}}d\tau,\\
				& II_3=\int_\frac{t}{2}^t\sum_{l_1+l_2=m_2}\left\||u|^{\gamma_0}(t-\tau)^{l_1}\delta_{-(t-\tau)a}^y\nabla_uH(t-\tau,y,u)\right\|_{L_{y,u}^1}\|\nabla_y^{m_1+l_1}\nabla_u^{l_2}F(\tau)\|_{L_y^\infty\dot C_{u}^{\gamma_0}}d\tau,\\
				&II_4=\int_\frac{t}{2}^t\sum_{l_1+l_2=m_2}\left\||u|^{\gamma_0}(t-\tau)^{l_1}\delta_{-a}^u\nabla_uH(t-\tau,y,u)\right\|_{L_{y,u}^1}\|\nabla_y^{m_1+l_1}\nabla_u^{l_2}F(\tau)\|_{L_y^\infty\dot C_{u}^{\gamma_0}}d\tau.
				\end{align*}
				
				By kernel estimates Lemma \ref{L1nom}, we can obtain
				\begin{equation}\label{lem63}
				\|\delta_a^v \nabla_x^{m_1} \nabla_v^{m_2} \mathcal{M}F(t)\|_{L^\infty} \lesssim |a|^\gamma t^{-\frac{(1+\alpha)m_1 + m_2 + \gamma}{\alpha} - \kappa} |||F|||.
				\end{equation}
				Then \eqref{lem61}, \eqref{lem62} and \eqref{lem63} conclude the proof.
			\end{proof}
			
			\begin{lemma}\label{estforce}
				For any $f,g\in X$, we have the estimate
				\begin{align}\label{rrr}
				|||f_2 \nabla_v \Lambda_v^{-\beta} f_1||| \lesssim \|f_1\|_{X} \|f_2\|_{X}.
				\end{align}
			\end{lemma}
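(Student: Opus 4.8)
The plan is to reduce the statement to a product estimate at a single time slice, plus a smoothing estimate for the velocity operator $\nabla_v\Lambda_v^{-\beta}$ on anisotropic Hölder spaces. Both the $X$-norm and the $|||\cdot|||$-norm are invariant under the scaling \eqref{scale}, the expression $F=f_2\nabla_v\Lambda_v^{-\beta}f_1$ transforms correctly under \eqref{scale}, and $F(t)$ depends only on $f_1(t),f_2(t)$. Hence, rescaling time so that a given $t$ becomes $t=1$, it suffices to prove
\begin{align*}
\sum_{m+n\le A}\|\nabla_x^m\nabla_v^n F(1)\|_{\dot C^{\frac{\gamma_0}{1+\alpha},\gamma_0}_{x,v}}\lesssim \|f_1\|_X\|f_2\|_X,
\end{align*}
using only the time-$1$ data $\|f_i(1)\|_{L^\infty_{x,v}}\le\|f_i\|_X$ and $\|\nabla_x^m\nabla_v^n f_i(1)\|_{\dot C^{\frac{\gamma}{1+\alpha},\gamma}_{x,v}}\le\|f_i\|_X$ for $m+n\le A$; for general $t$ the powers of $t$ then match automatically. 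As a preliminary I would record the pointwise bounds $\|\nabla_x^m\nabla_v^n f_i(1)\|_{L^\infty_{x,v}}\lesssim\|f_i\|_X$ for all $m+n\le A$, obtained by a downward induction on the order: the $L^\infty$-norm of a derivative is controlled by interpolating (Gagliardo--Nirenberg / Hölder interpolation in $x$ and in $v$) between the $L^\infty$-norm one order lower and the $\dot C^{\frac{\gamma}{1+\alpha},\gamma}$-seminorm at the suitable level, with $A\ge 100$ leaving ample room.

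The crux is a smoothing estimate for $T:=\nabla_v\Lambda_v^{-\beta}$, a Fourier multiplier of order $1-\beta\in(0,1)$ acting in $v$ only. I claim that for every $0<\sigma<\gamma-(1-\beta)$,
\begin{align*}
\|T g\|_{\dot C^{\sigma}_v}\lesssim\|g\|_{L^\infty_v}^{\,1-\theta}\,[g]_{\dot C^{\gamma}_v}^{\,\theta},\qquad \theta=\frac{\sigma+1-\beta}{\gamma}\in(0,1),
\end{align*}
uniformly in $x$. One proof factors $T=\Lambda_v^{1-\beta}R$ with $R=\nabla_v\Lambda_v^{-1}$ a Riesz transform bounded on every $\dot B^{s}_{\infty,\infty}$ and $\Lambda_v^{1-\beta}:\dot B^{s}_{\infty,\infty}\to\dot B^{s-(1-\beta)}_{\infty,\infty}$, then interpolates $\dot B^{\sigma+1-\beta}_{\infty,\infty}$ between $L^\infty$ and $\dot B^{\gamma}_{\infty,\infty}$. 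Alternatively one works from the kernel $Tg(v)=c\,\mathrm{p.v.}\int\frac{z}{|z|^{d+2-\beta}}g(v-z)\,dz$: split at radius $r$, treat the near part via the Hölder modulus of $g$ together with the spherical cancellation $\int_{|z|=1}\frac{z}{|z|}\,dS=0$, the far part via $\|g\|_{L^\infty}$ against the integrable tail $|z|^{-(d+1-\beta)}$, then optimize in $r$. The essential point is that $\sigma=\gamma_0=\gamma+1-\alpha$ is admissible, since $\gamma_0<\gamma-(1-\beta)\iff\alpha+\beta>2$; this is exactly the slack given by the hypothesis. As $T$ does not see $x$, it commutes with $\delta_a^x$, and $x$-increments are handled by applying the $\sigma=0$ case to $\delta_a^x g$.

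With these tools I would expand $\nabla_x^m\nabla_v^n F$ by the Leibniz rule, using $[T,\nabla_x]=[T,\nabla_v]=0$, into a finite sum of terms $\nabla_x^{m_1}\nabla_v^{n_1}f_2\cdot T(\nabla_x^{m_2}\nabla_v^{n_2}f_1)$ with $m_1+m_2=m$, $n_1+n_2=n$, so that every factor has order $\le A$. Each term is estimated in $\dot C^{\frac{\gamma_0}{1+\alpha},\gamma_0}_{x,v}$ via the product rule $\|\delta_a^x(uw)\|_{L^\infty}\le\|\delta_a^x u\|_{L^\infty}\|w\|_{L^\infty}+\|u\|_{L^\infty}\|\delta_a^x w\|_{L^\infty}$ and its $\delta_b^v$ analogue: for the factor $u=\nabla_x^{m_1}\nabla_v^{n_1}f_2$ one uses the preliminary $L^\infty$ bound and the $X$-seminorm for $\|\delta_a^x u\|_{L^\infty},\|\delta_b^v u\|_{L^\infty}$; for the factor $w=T(\nabla_x^{m_2}\nabla_v^{n_2}f_1)$ one uses the smoothing estimate with $\sigma=0$ (for $\|w\|_{L^\infty}$ and for $\|\delta_a^x w\|_{L^\infty}$, applied to $\delta_a^x\nabla_x^{m_2}\nabla_v^{n_2}f_1$) and with $\sigma=\gamma_0$ (for $\|\delta_b^v w\|_{L^\infty}$). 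On the $v$-Hölder side a $|b|^{\gamma}$-gain converts to the required $\gamma_0$-exponent for small increments since $\gamma\ge\gamma_0$; on the $x$-Hölder side, in the term where the increment falls on $f_1$'s $\dot C^{\gamma}_v$-seminorm (where the anisotropic norm provides no direct $|a|$-gain, having no joint modulus of continuity), one instead extracts the gain $|a|^{(\gamma-(1-\beta))/(1+\alpha)}$ from the $L^\infty$-factor through the interpolation estimate, and $\gamma-(1-\beta)\ge\gamma_0$ is again exactly $\alpha+\beta\ge2$. Summing the finitely many terms and matching the $t$-powers (forced by scaling to combine into $t^{-(\kappa+1+\frac{m(1+\alpha)+n-1+\gamma_0}{\alpha})}$) completes the proof.

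I expect the smoothing estimate for $T=\nabla_v\Lambda_v^{-\beta}$ to be the main obstacle: $\Lambda_v^{-\beta}$ is genuinely nonlocal and is \emph{not} bounded on $L^\infty$ (its kernel $|z|^{-(d-\beta)}$ has a non-integrable tail), so the estimate must be built by exploiting the gradient $\nabla_v$ — which restores the tail's integrability at the cost of an $L^1$-singularity at the origin, absorbed by the kernel's spherical cancellation — while checking that the anisotropic structure of $\dot C^{\frac{\gamma}{1+\alpha},\gamma}_{x,v}$ (separate $x$- and $v$-increments, no joint modulus) still suffices to close the $x$-Hölder bound for the product, which works only thanks to the quantitative gap $\gamma-(1-\beta)\ge\gamma_0$, i.e. $\alpha+\beta\ge2$.
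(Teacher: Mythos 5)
Your proposal is correct and takes essentially the same route as the paper: the paper also reduces the estimate to a Leibniz expansion plus the Hölder product rule, with the key ingredient being a smoothing bound for the order-$(1-\beta)$ velocity operator (written there as the Riesz transform composed with $\Lambda_v^{1-\beta}$ and proved by splitting the singular integral at radius $|a|\sim|b|^{\frac{1}{1+\alpha}}$, i.e.\ exactly your interpolation/optimization in $r$), exploiting the same slack $1-\beta+\gamma_0\leq\gamma$, equivalently $\alpha+\beta\geq 2$. Your rescaling to $t=1$, the explicit interpolation for intermediate $L^\infty$ derivative bounds, and the Besov-space packaging (with the $\sigma=0$ endpoint handled by your kernel-splitting argument, since $\dot B^0_{\infty,\infty}$ alone would not give $L^\infty$) are only presentational variants of the paper's weighted-in-time computation.
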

			\begin{proof}
				Note that for any $0<\mu<1$,
				\begin{equation}\label{interpolation}
				\begin{aligned}
				&\|fg\|_{\dot C_*^{\mu}}\lesssim\|f\|_{L_*^\infty}\|g\|_{\dot C_*^{\mu}}+\|f\|_{\dot C_*^{\mu}}\|g\|_{L_*^\infty},\\
				&\|\Lambda_*^{\mu}f\|_{L^\infty}\lesssim \|f\|_{\dot C_*^{\mu-\epsilon}}^{\frac{1}{2}}\|f\|_{\dot C_*^{\mu+\epsilon}}^{\frac{1}{2}},\quad 0<\mu+\epsilon,\mu-\epsilon<1,
				\end{aligned}
				\end{equation}
				with $*\in\{x,v\}$. Note that the Riesz transform is  $\dot C^{a}$ bounded for any $0<a<1$. Furthermore, we claim that 
				\begin{equation}\label{est:Holder}
				\begin{aligned}
				\|\Lambda_v^{1-\beta}\nabla_x^m\nabla_v^nf\|_{L_v^\infty\dot C_x^{\frac{\gamma_0}{1+\alpha}}}+\|\Lambda_v^{1-\beta}\nabla_x^m\nabla_v^nf\|_{L_x^\infty\dot C_v^{\gamma_0}}\lesssim t^{-\frac{(1+\alpha)m+n+\gamma}{\alpha}}\|f\|_{X}.
				\end{aligned}
				\end{equation}
				In fact, first for H\"{o}lder norm of $x$, for the case $|b|\geq t^{\frac{1+\alpha}{\alpha}}$,
				\begin{equation*}
				\frac{\|\delta_b^x\Lambda_v^{1-\beta}\nabla_x^m\nabla_v^nf\|_{L_{x,v}^\infty}}{|b|^{\frac{\gamma_0}{1+\alpha}}}\lesssim t^{-\frac{\gamma_0}{\alpha}}\|\Lambda_v^{1-\beta}\nabla_x^m\nabla_v^nf\|_{L_{x,v}^\infty}\lesssim t^{-\frac{\gamma+(1+\alpha)m+n}{\alpha}}\|f\|_{X}.
				\end{equation*}
				For the case $|b|\leq t^{\frac{1+\alpha}{\alpha}}$, we split the integral into two parts,
				\begin{align*}
				\delta_b^xD^{1-\beta}_v\nabla_x^m\nabla_v^nf&=C_\beta \int_{\mathbb{R}^d}\frac{\delta_b^x\delta_a^v\nabla_x^m\nabla_v^nf}{|a|^{d+1-\beta}}da\\
				&=C_\beta \int_{|a|<|b|^{\frac{1}{1+\alpha}}}\frac{\delta_b^x\delta_a^v\nabla_x^m\nabla_v^nf}{|a|^{d+1-\beta}}da+C_\beta \int_{|a|>|b|^{\frac{1}{1+\alpha}}}\frac{\delta_b^x\delta_a^v\nabla_x^m\nabla_v^nf}{|a|^{d+1-\beta}}da\\
				&:=I_1+I_2.
				\end{align*}
				We have
				\begin{align*}
				|I_1|&\lesssim \int_{|a|< |b|^{\frac{1}{1+\alpha}}}|a|^{-d-1+\beta+\gamma}da\|\nabla^m_x\nabla^n_vf(t)\|_{L_x^\infty\dot C_v^\gamma}
				\lesssim |b|^{\frac{\gamma_0}{1+\alpha}}t^{-\frac{\gamma+(1+\alpha)m+n}{\alpha}}\|f\|_{X}.
				\end{align*} 
				And 
				\begin{equation*}
				|I_2|\lesssim \int_{|a|> |b|^{\frac{1}{1+\alpha}}}|a|^{-d-1+\beta}da|b|^{\frac{\gamma}{1+\alpha}}\|\nabla^m_x\nabla^n_vf(t)\|_{L_{v}^\infty\dot C_x^{\frac{\gamma}{1+\alpha}}}\lesssim |b|^{\frac{\gamma_0}{1+\alpha}}t^{-\frac{\gamma+(1+\alpha)m+n}{\alpha}}\|f\|_{X}.
				\end{equation*}
				Combining the above estimates, we obtain 
				\begin{align*}
				\|\Lambda_v^{1-\beta}\nabla_x^m\nabla_v^nf(t)\|_{L_v^\infty\dot C_x^{\frac{\gamma_0}{1+\alpha}}}\lesssim t^{-\frac{(1+\alpha)m+n+\gamma}{\alpha}}\|f\|_{X}.
				\end{align*}
				Similarly, we have 
				\begin{align*}
				\|\Lambda_v^{1-\beta}\nabla_x^m\nabla_v^nf(t)\|_{L_x^\infty\dot C_v^{\gamma_0}}\lesssim t^{-\frac{(1+\alpha)m+n+\gamma}{\alpha}}\|f\|_{X}.
				\end{align*}
				Thus, we obtain  \eqref{est:Holder}.\\
				Note that $1-\beta+\gamma_0\leq \gamma$, let $F=f_2\nabla_v\Lambda_v^{-\beta}f_1$. By \eqref{interpolation} and \eqref{est:Holder}, for any $m,n\in\mathbb{N}$, $m+n\leq A$, we have
				\begin{align*}
				&\|\nabla_x^{m}\nabla_v^{n}F(t)\|_{\dot C_{x,v}^{\frac{\gamma_0}{1+\alpha},\gamma_0}}\\
				&\lesssim\sum_{\substack{m_1+m_2=m\\ n_1+n_2=n}}\|\nabla_x^{m_1}\nabla_v^{n_1}\nabla_v\Lambda^{-\beta}_vf_1(t)\|_{\dot C_{x,v}^{\frac{\gamma_0}{1+\alpha},\gamma_0}}\|\nabla_x^{m_2}\nabla_v^{n_2}f_2(t)\|_{L^\infty_{x,v}}\\
				&\quad\quad+\sum_{\substack{m_1+m_2=m\\ n_1+n_2=n}}\|\nabla_x^{m_1}\nabla_v^{n_1}\nabla_v\Lambda^{-\beta}_vf_1(t)\|_{L^\infty_{x,v}}\|\nabla_x^{m_2}\nabla_v^{n_2}f_2(t)\|_{\dot C_{x,v}^{\frac{\gamma_0}{1+\alpha},\gamma_0}}\\
				&\lesssim t^{-\kappa-1-\frac{(1+\alpha)m+n+\gamma_0-1}{\alpha}}\|f_1\|_{X}\|f_2\|_{X}.
				\end{align*}
				This implies \eqref{rrr} and completes the proof of the lemma.
			\end{proof}\\
			
			For \( g \in X \), let \( h = \mathcal{S}(g) \) be the map defined in \eqref{def:map}. We now prove that there exists $\sigma>0$ such that  \( \mathcal{S} \) is a contraction map in the set $$
			X^\sigma:= \{f:\|f\|_X\leq \sigma\}.
			$$
			
			\begin{proposition}\label{contrac}
				For any $f_0$ with  \( [f_0] <\infty\), and any $g,g_1,g_2\in X$, it holds
				\begin{align}
				&\|\mathcal{S}g\|_{X} \leq C_1([f_0] + \|g\|_{X}^2),\label{map1}\\
				&	\|\mathcal{S}g_1 - \mathcal{S}g_2\|_{X} \leq C_1(\|g_1\|_{X} + \|g_2\|_{X}) \|g_1 - g_2\|_{X}.\label{map2}
				\end{align}
			\end{proposition}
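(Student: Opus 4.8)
The plan is simply to combine Lemmas~\ref{lemlin}, \ref{lemnl} and \ref{estforce}, so the proof at this stage is mostly bookkeeping. First I would recall the decomposition \eqref{hhh}: writing $h=\mathcal{S}g$, we have $h=h_L+h_N$, where $h_L$ depends only on $f_0$ and, with $F[g]=g\,\nabla_v\Lambda_v^{-\beta}g$,
\[
h_N(t,x,v)=\int_0^t\iint_{\mathbb{R}^{2d}}H(t-\tau,x-(t-\tau)v-y,v-u)\,\operatorname{div}_uF[g](\tau,y,u)\,dy\,du\,d\tau .
\]
Integrating by parts in the $u$-variable (the boundary contributions vanish because $H(t-\tau,\cdot,\cdot)$ decays faster than any polynomial at infinity, by \eqref{z1000}) transfers the divergence onto the kernel, which identifies $h_N$ with $\pm\,\mathcal{M}F[g]$, where $\mathcal{M}$ is the operator of Lemma~\ref{lemnl}; the sign is irrelevant since every estimate below is absolutely homogeneous.

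For \eqref{map1} I would estimate the two pieces separately. By definition $[f_0]_1=\|h_L\|_X$, so Lemma~\ref{lemlin} gives $\|h_L\|_X\lesssim[f_0]$. For the nonlinear piece, Lemma~\ref{lemnl} gives $\|h_N\|_X=\|\mathcal{M}F[g]\|_X\lesssim|||F[g]|||$, and Lemma~\ref{estforce} with $f_1=f_2=g$ gives $|||F[g]|||\lesssim\|g\|_X^2$. Summing the two bounds and calling $C_1$ the resulting implicit constant yields $\|\mathcal{S}g\|_X\le C_1\big([f_0]+\|g\|_X^2\big)$.

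For \eqref{map2} I would use that $g\mapsto\mathcal{S}g$ is affine with linear part independent of $g$, so the $h_L$-terms cancel in the difference and $\mathcal{S}g_1-\mathcal{S}g_2=\pm\,\mathcal{M}\big(F[g_1]-F[g_2]\big)$. Exploiting the bilinear structure of $F$, I write
\[
F[g_1]-F[g_2]=(g_1-g_2)\,\nabla_v\Lambda_v^{-\beta}g_1+g_2\,\nabla_v\Lambda_v^{-\beta}(g_1-g_2),
\]
and apply Lemma~\ref{estforce} to each summand — first with $f_2=g_1-g_2$, $f_1=g_1$, then with $f_2=g_2$, $f_1=g_1-g_2$ — to obtain $|||F[g_1]-F[g_2]|||\lesssim(\|g_1\|_X+\|g_2\|_X)\|g_1-g_2\|_X$. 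Invoking Lemma~\ref{lemnl} once more as above then gives \eqref{map2}, after enlarging $C_1$ if necessary.

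There is no genuine obstacle at this stage: the analytical content was front-loaded into the kernel bounds \eqref{z1000}, \eqref{e1}--\eqref{e5} and into Lemmas~\ref{lemlin}, \ref{lemnl} and \ref{estforce}. The only points that need a moment's care are the integration-by-parts step identifying $h_N$ with $\mathcal{M}F[g]$, and checking that the norm $|||\cdot|||$ produced by Lemma~\ref{estforce} is exactly the one consumed by Lemma~\ref{lemnl}; this compatibility is built in through the choice $\gamma_0=\gamma+1-\alpha$, for which the powers of $t$ appearing in the two lemmas coincide.
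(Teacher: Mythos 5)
Your proposal is correct and follows essentially the same route as the paper: decompose $\mathcal{S}g=h_L+h_N$ via \eqref{hhh}, bound $h_L$ by Lemma~\ref{lemlin}, bound $h_N$ by combining Lemmas~\ref{lemnl} and \ref{estforce}, and for the difference use the bilinear splitting of $F[g_1]-F[g_2]$ with the same two lemmas. The only (immaterial) difference is which factor carries $g_1-g_2$ in the bilinear splitting, and your explicit remarks on the integration by parts identifying $h_N$ with $\mathcal{M}F[g]$ and on the compatibility of $|||\cdot|||$ are points the paper leaves implicit.
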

			\begin{proof}
				Recall from \eqref{hhh} that $h=h_L+h_N$.
				By Lemma \ref{lemlin}, Lemma \ref{lemnl} and Lemma~\ref{estforce}, we have
				\[
				\|h_L\|_{X} \lesssim [f_0], \quad \|h_N\|_{X} \lesssim \|g\|_{X}^2.
				\]
				Hence, we obtain \eqref{map1}.
				
				For the contraction property, let \( \mathbf{h} = h_1 - h_2 = \mathcal{S}g_1 - \mathcal{S}g_2 \) and \( \mathbf{g} = g_1 - g_2 \). Then \( \mathbf{f} \) satisfies
				\[
				\begin{cases}
				\partial_t \mathbf{h} + v \cdot \nabla_x \mathbf{h} + \Lambda_v^\alpha \mathbf{h} = \operatorname{div}(F[g_1] - F[g_2]), \\
				\mathbf{h}|_{t=0} = 0.
				\end{cases}
				\]
				By Lemma~\ref{lemnl}, we have
				\[
				\|\mathbf{h}\|_{X} \lesssim |||F[g_1] - F[g_2]|||.
				\]
				Since
				\[
				F[g_1] - F[g_2] = g_1 \nabla_v \Lambda_v^{-\beta} \mathbf{g} + \mathbf{g} \nabla_v \Lambda_v^{-\beta} g_2,
				\]
				Lemma~\ref{estforce} implies
				\[
				|||F[g_1] - F[g_2]||| \lesssim \|g_1 - g_2\|_{X} (\|g_1\|_{X} + \|g_2\|_{X}).
				\]
				This yields \eqref{map2} and completes the proof.
			\end{proof}\vspace{0.3cm}\\
			\begin{proof}[Proof of Theorem~\ref{mainthm}]
				We show that \( \mathcal{S} \) is a contraction map. Take $\epsilon_0=\frac{1}{100(C_1+1)}$ and $\sigma=2C_1[f_0]$. By Proposition~\ref{contrac}, for any $f_0$ such that $[f_0]\leq \epsilon_0$, and any  $g,g_1,g_2\in X^\sigma$,
				it holds                \begin{align*}
				&\|\mathcal{S}g\|_X\leq C_1([f_0]+\sigma^2)\leq \sigma,\\
				&\|\mathcal{S}g_1-\mathcal{S}g_2\|_X\leq C_1\sigma\|g_1-g_2\|_X\leq \frac{1}{2}\|g_1-g_2\|_X.
				\end{align*}
				Hence $\mathcal{S}$ is a contraction map in $X^\sigma$. By the contraction mapping theorem, there exists a unique $f\in X^\sigma$ such that $f=\mathcal{S}f$, which is a solution to the equation \eqref{eq:fk}. This completes the proof.
			\end{proof}
			\section* {Acknowledgments}
			
			Q. H.  Nguyen  is supported by the Academy of Mathematics and Systems Science, Chinese Academy of Sciences startup fund, and the National Natural Science Foundation of China (No. 12050410257 and No. 12288201) and  the National Key R$\&$D Program of China under grant 2021YFA1000800. 
			
		\end{document}